\newcommand\sfit{\em\sffamily}
\begin{document} 

\title[Biflippers and head to tail]{Biflippers and head to tail composition rules}
\author{Oleg Viro}
\dedicatory{Stony Brook University, NY, USA} 
\address{Department of Mathematics, Stony Brook University,
Stony Brook NY, 11794-3651, USA
}   
\email{oleg.viro@gmail.com}

\begin{abstract} 
A new graphical calculus for operating with isometries of 
low dimensional spaces of classical geometries is proposed. 
It is similar to a well-known graphical
representation  for vectors and translations in an affine space. 
Instead of arrows, we use biflippers, which are arrows framed  at the 
end points with subspaces. 
The head to tail addition of vectors and composition of translations is 
generalized to head to tail composition rules for isometries.     
\end{abstract}
\maketitle

\section*{Introduction}\label{s0}

Any isometry of Euclidean space is a composition of
two symmetries of order two. A symmetry of order two is determined by its 
set of fixed point. Drawing of the fixed point sets gives an easy and
effective graphical way to present isometries and operate with them.
Below a new technique for this is proposed. 

It is naturally limited to low dimensions (up to 3), 
because in the higher
dimensions the situation becomes more complicated on the one hand and 
drawing less practical, on the other hand. In the low dimensions, the same
technique works for other classical geometries: elliptic
(both in spheres and projective spaces), hyperbolic, and conformal.  
\medskip

\subsection*{\sffamily Added in August 2015}\label{s0a1}
{\sffamily
After uploading the original version of this preprint in December
2014 to the arXiv, the
author has come across several publications in which some 
results of this preprint had been obtained. Below, the appropriate references
and discussions are inserted in paragraphs similar to this one. They all 
are marked by words ``Added in August 2015''. 

Although the goal of the present paper (creation of graphical tools for 
operating with isometries in low-dimensional spaces) was not pursued in
these publications, they put the present paper in a broader context,
and reduced its novelty. 
}

\tableofcontents

\section{Portraits of 
isometries}\label{s1}

\subsection{Flips and flippers.}\label{s1.1}
Let $X$ be an Euclidean space (say, a plane or the 3-space) or a subset of
a Euclidean space. 
Let $A$ be a non-empty subset of $X$. If 
\begin{itemize}
\item  there exists an 
isometry $F:X\to X$\\ (i.e., a map which preserves distances between
points),
\item which is an involution\\ (i.e., $F\circ F=\id$),
\item $A$ is the fixed point
set of $F$\\ (i.e., the set of all points of $X$ that are not moved by $F$), 
\item and if such $F$ is unique,
\end{itemize}
then $A$ is called a {\sfit flipper\/} in $X$, 
and $F$ is called a {\sfit flip\/} in $A$ and is denoted by $F_A$.  

 Flipper $A$ and flip $F_A$ determine each other. In particular, 
$A$ encodes $F_A$.
In low dimensional cases, if $A$ is clearly drawn, the picture of
$A$ is a portrait for $F_A$. 
\smallskip

\noindent
{\bf Examples.}
Obviously, in any $X$ the whole $X$ is a flipper, the corresponding 
flip is the identity map.

On the plane, a set is a flipper iff it is either a line or a point.
 More generally, in a Euclidean space of any dimension, a set is a
flipper iff it is a non-empty affine subspace. In a Euclidean space of any
dimension, if an isometry is involution, then it is a flip in a
non-empty affine subspace.

On the 2-sphere there are two kinds of flippers: great circles and pairs of
antipodal points. The corresponding flips are the restrictions of the
flips of the ambient 3-space with the flippers passing through the
origin.\smallskip

\noindent{\bf Generalizations.}
The notions introduced above can be considered in much more general setup:
$X$ may be a whatever space with whatever structure, and the r\^ole of 
isometries would play automorphisms of this structure. 
In the last section we will consider the 2-sphere with conformal structure. 
\smallskip 

\noindent
{\bf About uniqueness. }
In all the examples above uniqueness of $F$ was not a restriction: if $F$
existed, then it was unique. The uniqueness condition was included in order
to ensure one-to-one correspondence between flips and flippers.
There are sets in more complicated
figures, which are fixed point sets of several different 
involutions-isometries.

 One of the simplest examples: $X$ is the union of two
perpendicular lines $L$ and $M$ on the plane, and $A$ is the 
intersection point of the lines. Then $A$ is the fixed point set for three
involutions-isometries. They are restrictions to $X$ of three flips of
the ambient plane: the central symmetry in $A$ and the 
reflections in the two lines bisecting the angles between $L$ and $M$.
Therefore $A$ is not qualified to be a flipper of $X$, although $A$ is a
flipper of the ambient plane.  
\smallskip

\noindent{\bf Terminological remark.}
The words {\sfit flip\/} and {\sfit flipper\/} seems to be new in this context.
Old words that are used in mathematics with a close meanings are 
{\it symmetry, reflection\/} and {\it inversion.} 

The closest is {\it reflection\/}, but it has a well-established meaning.  
Most mathematicians believe that the fixed point set of a reflection must 
have codimension one. This kind of reflections plays a special role in 
geometry and group theory, but is too narrow for our purposes. 
A use of the word reflection in the sense above (i.e., for what we call a 
flip) would be considered a language abuse, see Wikipedia article {\it Point
reflection.\/} 

The word {\it inversion\/} is used for several different notions including 
the symmetry with respect to a point and the symmetry with respect to a 
sphere. The latter seems to be commonly used and loading yet another
meaning would be fairly criticized. 

The word {\it symmetry\/}, is commonly used in much broader sense than we 
need, often just as a synonym for automorphism.

\subsection{Flip-flop decompositions.}\label{s1.2}
A presentation of an isometry $T$ as a composition of two 
flips $F_B\circ F_A$ is called a {\sfit flip-flop decomposition\/} of
$T$. An ordered pair $(A,B)$ of flippers $A$ and $B$  is called a {\sfit
biflipper}. 
The biflipper $(A,B)$ encodes the flip-flop decomposition 
$T=F_B\circ F_A$ and hence the isometry $T$.

If $A$ is a flipper in a space $X$, then the flip $F_A$ has a flip-flop
decompositions $F_A\circ F_X$ and $F_X\circ F_A$, because $X$ is a flipper
in itself and $F_X=\id$. 
However, often a flip has other flip-flop
decompositions. Flip-flop decompositions $F_A=F_X\circ F_A=F_A\circ F_X$ are
said to be {\sfit improper}, all other flip-flop decompositions are said to be
{\sfit proper}. A biflipper is said to be {\sfit proper\/}, if none of its two
flippers is the whole space.

A flip-flop decomposition of an isometry gives an opportunity to draw a
picture, which completely describes the isometry. 
However, a picture showing just $A$ and $B$ would be incomplete. It does not
show a crucial bit of information: which of the flipper is the first in the
biflipper and which is the second. In other words, it does not show 
the order of $F_A$ and $F_B$  in the flip-flop decomposition. The order
distinguishes an isometry from the inverse isometry. Indeed, since
flips are involutions, $F_A\circ F_B=F_A^{-1}\circ F_B^{-1}=(F_B\circ
F_A)^{-1}$.

{\sffamily
\subsection*{\sffamily Added in August 2015}\label{s1a1}
{\sfit Existence of a flip-flop decomposition is known as 
strong reversibility.\/} A presentation
of a group element as a product of two elements of order two was studied
and proved to be useful in
many contexts. See a recent survey \cite{O'Farrell}. Group elements that admit
such representation are called {\sfit strongly reversible.\/} This name
comes from the relation to the following notion of {\sfit reversibility:\/} 
a group element is said to be {\sfit reversible\/} if it is conjugate (in the
group) to its own inverse. In formulas: $f\in G$ is reversible if there
exists $h\in G$ such that $f^{-1}=h^{-1}fh$. Strong reversibility implies 
reversibility.
Indeed, if $f=\Ga\Gb$ and $\Ga^2=\Gb^2=1$, then
$f^{-1}=\Gb^{-1}\Ga^{-1}=\Gb\Ga=\Ga^2\Gb\Ga=\Ga(\Ga\Gb)\Ga=\Ga^{-1}f\Ga$.  

Strong reversibility of isometries of the classical homogeneous spaces
(Euclidean, hyperbolic, and elliptic) was studied by Short \cite{Short}
and, independently, by Basmajian and Maskit \cite{BM}.

\subsection*{A topological view on reversibility and strong 
rever\-si\-bi\-lity}\label{s1a3}
Obviously, an element $x$ of a group $G$ is reversible if and only if 
there exists a homomorphism $F:K\to G$ of the group 
$K=\{m,l\mid mlml^{-1}=1\}$ such that $F(m)=x$. 

The group $K$ is the
fundamental group of the Klein bottle. Its element $m$ is realized by 
a meridian (a simple closed curve cutting along which turns the Klein bottle
to the cylinder). An element $x$ of the fundamental group of a topological
space $X$ is reversible if and only if there is a continuous map of the
Klein bottle to $X$ which maps a meridian to a loop realizing $x$. 
 
Similarly, $x\in G$ is
strongly reversible if and only if there are two homomorphisms 
$\mathbb Z/_2\to G$ such that $x$ is the product of the images 
of the non-trivial element. 

The relation between reversibility and strong reversibility has a
topological counterpart. The Klein bottle is a connected sum 
of two copies of the projective plane. Contracting the neck of the 
connected sum gives a
continuous map of the Klein bottle onto the bouquet of two projective
planes, under which a meridian of the Klein bottle is mapped onto a 
bouquet of the projective lines. 
\subsection*{A remarkarble reformulation for strong reversiblity}\label{s1a4}
{\em An element $x$ of a group $G$
is strongly reversible if and only if there exists an involution $\Ga\in G$
such that $\Ga x$ is an involution.\/} 

Indeed, if $\Ga$ and $\Ga x=\Gb$ are involutions, then $x=\Ga^2 x=\Ga\Gb$ 
is a product of two involutions, hence strongly reversible. Conversely, 
if $x=\Ga\Gb$ where $\Ga$ and $\Gb$ are involutions, 
then $\Ga x=\Ga^2\Gb=\Gb$ is an involution.\qed
}

\subsection{What an arrow may be for.}\label{s1.3}
Often, an arrow in a mathematical picture portrays nothing but an ordered 
pair of points, the arrow tail and arrowhead. 
Individual points are difficult to discern on a picture,
and, in order to make the points more visible, they are connected with a
line segment, and, in order to show which point is first and which is the
second, the segment is turned into an arrow directed
from the first of the points to the second.

\subsection{Arrows between flippers.}\label{s1.4}
\begin{wrapfigure}[5]{r}{1.8in} 
{\includegraphics{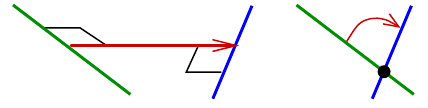}}
\end{wrapfigure}

We need to portray an ordered pair of flippers. 
The flippers may happen to be points in an Euclidean space, and then 
we just draw 
the arrow between them, as usual. In a general situation, we draw the
flippers 
and connect them with an arrow, in order to show the ordering. 
If the flippers are disjoint affine subspaces in a Euclidean space, 
then we choose an arrow along a common 
perpendicular to the both subspaces. If the flippers
intersect, we still need to show their ordering, and, to this end, we connect
them with an arc-arrow.

In order to distinguish the first and the second flippers in words, we will
call the first flipper in a biflipper the {\sfit tail\/} flipper and the
second one, {\sfit the head\/} flipper, according to the place occupied by them in a
picture: at the arrow tail or arrow head, respectively.

A pair of flippers $A$ and $B$ together with an arrow, which connects $A$ to 
$B$ and specifies the ordering, is called a {\sfit arrowed biflipper\/} (or
just a {\sfit biflipper\/} or just an {\sfit arrow\/}, when there is no danger 
of confusion) and denoted by  
$\overrightarrow{AB}$. Thus an arrow $\overrightarrow{AB}$ provides a 
graphical encoding for the
flip-flop decomposition  $T=F_B\circ F_A$ and, in particular, a graphical
encoding, a portrait for $T$.

\subsection{Biflippers on the plane.}\label{s1.5}

It is easy to list all kinds of biflippers that exist on the 
plane. Take all possible mutual positions of two different flippers 
(recall that a proper flipper on the plane is either a points or a line),
and connect the flippers with an arrow.
Proper biflippers look as follows:\\ 

\centerline{\includegraphics[scale=.9]{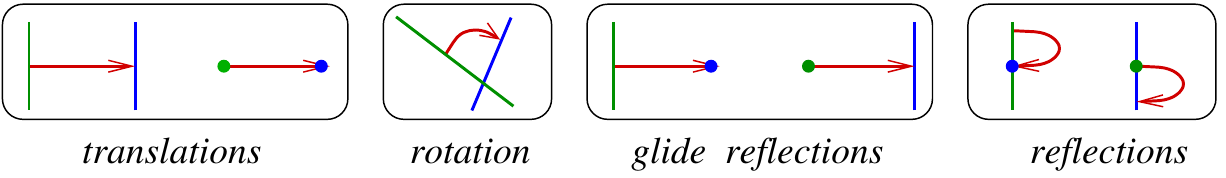}}
\vspace{2pt}\noindent
These biflippers encode all the plane isometries (except the identity).
Let us consider them one by one. 

\subsection{Translations.}\label{s1.6}
biflippers \ $\fig{frAr-tr-1.pdf}$ \ and \ $\fig{frAr-tr-2}$ \ encode the translation in the 
direction of the arrow by the distance {\bfit twice the length\/} of the arrow.
Here are sketches of the well-known proofs: \ 
$\fig{frAr-tr-1-pf.pdf}$ \ and \ $\fig{frAr-tr-2-pf.pdf}$ 

\subsection{Rotations.}\label{s1.7}
Biflipper \ $\fig{frAr-rot.pdf}$ \ encodes the rotation about 
the intersection 
point of the flippers, in the direction from the tail flipper to the head 
flipper, 
by the angle twice greater than the angle between them. A sketch of
proof: $\fig{frAr-rot-pf.pdf}$.

{\bfit
Warning:} often 
a rotation is shown by a similar picture, but with 
the angle between the lines twice greater and the direction of 
the arc-arrow indicating if the rotation is clockwise or counter-clockwise. 
Recall that in a biflipper the arc-arrow points from the tail flipper to
head flipper. The following two biflippers and a traditional picture 
show rotations in the same clockwise direction by $151^\circ$: \\ 
$\fig{frAr-rot.pdf}$ \ \ $\fig{frAr-rot2.pdf}$ \ \ $\fig{pic-rot}$
 
\subsection{Glide reflections.}\label{s1.8}
Biflippers \ $\fig{frAr-gl-1.pdf}$ \  and \ $\fig{frAr-gl-2.pdf}$ \ encode  glide reflections. 
This is the 
reflection in the line which contains the arrow, followed by the translation 
in the direction of the arrow by the distance twice greater than the length
of the arrow. Sketches of proofs:
$\fig{frAr-gl-pf-1.pdf}$ \ \ and \ \ $\fig{frAr-gl-pf-2.pdf}$ 

\subsection{Reflections.}\label{s1.9}
Biflippers \ $\fig{frAr-refl-1.pdf}$ \ and \ $\fig{frAr-refl-2.pdf}$ \ encode reflections;
specifically, the reflections in the line which is  perpendicular
to the line-flipper, and erected from the point-flipper. Sketches of proofs:  
 $\fig{frAr-refl-1-pf.pdf}$ and $\fig{frAr-refl-2-pf.pdf}$
These are degenerate versions of the biflippers that encode glide
reflections, in the same sense as reflections in
lines are degenerate glide reflections. 

\subsection{The identity.}\label{s1.10}
The identity is also an isometry and it is encoded by any
biflipper consisting of two identical flippers. The identity is a 
special case (i.e., a degeneration) for translations as well as for rotations. 

\section{Biflippers of the same isometry}\label{s2}

\subsection{Equivalence of biflippers.}\label{s2.1}
Biflippers $\overrightarrow{AB}$ and $\overrightarrow{CD}$ are said to
be {\sfit equivalent\/} \ if $F_B\circ F_A=F_D\circ F_C$.

Below we describe two easy ways to change a biflipper 
 into a different, but equivalent biflipper.

\begin{thm}\label{th01}
Let $A$, $B$ and $C$ be flippers such that $F_C$ commutes both
with $F_A$ and $F_B$. Then  $F_A\circ F_C$ and $F_B\circ F_C$
are involutions. If these involutions are flips in flippers $A'$ and $B'$ 
(i.e.,  $F_A\circ F_C=F_{A'}$ and $F_B\circ F_C=F_{B'}$), 
then $\overrightarrow{AB}$ is equivalent to $\overrightarrow{A'B'}$.
\end{thm}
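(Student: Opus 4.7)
The plan is to derive both assertions from the commutativity hypothesis by straightforward algebra on the group of isometries, with no further geometric input required.

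For the first claim, I would compute $(F_A\circ F_C)^2 = F_A\circ F_C\circ F_A\circ F_C$ and use the commutativity of $F_A$ and $F_C$ on the middle pair to re-pair $F_A$ with itself and $F_C$ with itself; since each is an involution, the product collapses to the identity. The same calculation with $B$ in place of $A$ handles $F_B\circ F_C$.

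For the equivalence of biflippers, the definition in Section~\ref{s2.1} requires $F_{B'}\circ F_{A'}=F_B\circ F_A$. Substituting $F_{A'}=F_A\circ F_C$ and $F_{B'}=F_B\circ F_C$ yields a product of four flips; one commutation of $F_C$ past $F_A$ brings the two copies of $F_C$ adjacent in the middle, where they cancel because $F_C$ is an involution, leaving $F_B\circ F_A$.

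I do not expect any serious obstacle here, since the whole argument is a few lines of symbol pushing once commutativity is granted. The only conceptual point worth flagging is the hedged clause \emph{if these involutions are flips in flippers $A'$ and $B'$}. As the cross of perpendicular lines in Section~\ref{s1.1} shows, an involution on a general space need not be a flip in the sense of this paper, so existence of $A'$ and $B'$ is a real assumption; but it is built into the hypothesis, and the proof never has to produce $A'$ or $B'$ explicitly.
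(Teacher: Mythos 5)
Your proof is correct and follows essentially the same route as the paper's: the same re-pairing computation shows $(F_A\circ F_C)^2=\id$, and the same single commutation of $F_C$ past $F_A$ followed by cancellation of $F_C\circ F_C$ gives $F_{B'}\circ F_{A'}=F_B\circ F_A$. Your remark that the existence of the flippers $A'$ and $B'$ is a genuine hypothesis (not automatic in a general space) is accurate and consistent with the paper's own discussion.
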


\begin{proof} If two involutions, say $\Gs$ and $\Gt$, commute, then their 
product is an involution. Indeed, by multiplying the equality $\Gs\Gt=\Gt\Gs$ by $\Gs\Gt$
from the right, we obtain $(\Gs\Gt)^2=\Gt\Gs\Gs\Gt=1$.  Further, 
$$
F_{B'}\circ F_{A'}= F_B\circ F_C\circ F_A\circ F_C=
F_B\circ F_C\circ F_C\circ F_A=F_B\circ F_A.
$$
Thus $\overrightarrow{AB}$ is equivalent to $\overrightarrow{A'B'}$. 
\end{proof}

In a Euclidean space any isometry of order two is a reflection in a
subspace. Thus for Euclidean space the second assumption of 
Theorem \ref{th01} is nothing but 
just a definition for $A'$ and $B'$.\medskip

\noindent
{\bfit Examples. } Let $A$, $B$ and $C$ be lines on the plane,  $A\perp
C\perp B$, so $A\parallel B$. Let $A'=A\cap C$ and $B'=B\cap C$. Then  
\begin{enumerate} 
\item $\overrightarrow{AB}$ and $\overrightarrow{A'B'}$ are equivalent;
\item   $\overrightarrow{AB'}$ and $\overrightarrow{A'B}$ 
are equivalent.
\end{enumerate} 

As we saw above, in the first example the arrows encode the 
same translation, in the second - the same glide reflection.
 
\begin{thm}\label{th02}
Let $S$ be the isometry 
encoded by $\overrightarrow{AB}$ and let $T$ belong to the centralizer of
$S$ in the isometry group (i.e., $T$ be 
an isometry, which commutes with $S$). Then 
$\overrightarrow{T(A)T(B)}$ is equivalent to  $\overrightarrow{AB}$.
\end{thm}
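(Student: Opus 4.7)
The plan is to reduce the statement to the well-known conjugation formula $F_{T(A)} = T\circ F_A\circ T^{-1}$ for the flip in the image of a flipper under an isometry $T$. Granted this, the composition $F_{T(B)}\circ F_{T(A)}$ telescopes to $T\circ(F_B\circ F_A)\circ T^{-1} = TST^{-1}$, which equals $S = F_B\circ F_A$ by the hypothesis that $T$ commutes with $S$. That is exactly the equivalence $\overrightarrow{T(A)T(B)}\sim\overrightarrow{AB}$.

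First I would justify the conjugation formula. The map $\Phi := T\circ F_A\circ T^{-1}$ is an isometry, because it is a composition of isometries; and $\Phi\circ\Phi = T\circ F_A^2\circ T^{-1} = \mathrm{id}$, so it is an involution. Its fixed point set is $T(\mathrm{Fix}(F_A)) = T(A)$, because $\Phi(x)=x$ iff $F_A(T^{-1}x)=T^{-1}x$ iff $T^{-1}x\in A$ iff $x\in T(A)$. Since $A$ is a flipper, $T(A)$ is non-empty; by the uniqueness clause in the definition of flipper applied to $T(A)$ (which we need to know is a flipper in the first place — in the classical settings of the paper this is immediate because $T$ maps flippers to flippers of the same type, e.g.\ affine subspaces to affine subspaces), we conclude $\Phi = F_{T(A)}$.

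Then the main calculation is a one-liner:
\[
F_{T(B)}\circ F_{T(A)} = (T F_B T^{-1})(T F_A T^{-1}) = T(F_B\circ F_A)T^{-1} = TST^{-1} = S = F_B\circ F_A,
\]
where the penultimate equality uses $TS = ST$.

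The only real obstacle is the very first step: verifying that $T(A)$ is a flipper, i.e.\ that uniqueness of the involution with fixed point set $T(A)$ carries over from $A$. In the Euclidean (and other classical) settings treated in the paper, flippers are precisely non-empty affine subspaces (or their analogues), which are preserved in form by isometries, so this is automatic; in the abstract generality of Section \ref{s1.1} one has to assume or observe that the class of flippers is closed under the isometry $T$. Once that point is granted, the proof is essentially formal.
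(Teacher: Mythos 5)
Your proof is correct and follows essentially the same route as the paper: the paper simply asserts the conjugation formula $F_{T(C)}=T\circ F_C\circ T^{-1}$ and then performs the identical telescoping computation $F_{T(B)}\circ F_{T(A)}=TST^{-1}=S$. The only difference is that you supply the verification of the conjugation formula (and flag the point that $T(A)$ must again be a flipper), which the paper leaves implicit.
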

\begin{proof} Notice, that if $C$ is a flipper and $T$ is any isometry, then
$T(C)$ is a flipper, and
 $F_{T(C)}=T\circ F_C\circ T^{-1}$. Therefore,  
\begin{multline*}
F_{T(B)}\circ F_{T(A)}=
T\circ F_B\circ T^{-1}\circ T\circ F_A\circ T^{-1}\\=
T\circ F_B\circ F_A\circ T^{-1}=
TST^{-1}=S.
\end{multline*}
\end{proof}

\subsection{Equivalence of plane biflippers.}\label{s2.2}
Recall that biflippers, which encode the same isometry, are said to be 
equivalent. For different biflippers equivalence may look quite 
different. 

Arrowed biflippers, that encode translations, are equivalent iff the 
underlying arrows (i.e., the bare arrow, with the flippers removed), can be 
obtained from each other by a translation. 

Arrowed biflippers, that define glide reflections, are
equivalent iff the underlying arrows lie on the same line and have the same
length and direction. In other words, they can be obtained from each other
by a translation along this line. The equivalence classes of the underlying
arrows are known as {\bfit sliding vectors.\/} By Theorem \ref{th01}, it 
does not matter whether the tail flipper or the head flopper is a line. 
Simultaneous change of the dimensions of the tail and head flippers  does not
change the gliding reflection. 

Arrowed biflippers, that define rotations, are equivalent 
iff they can be obtained
from each other by a rotation about the intersection points of
the flippers (whis is the center of the rotation). 

Arrowed biflippers, that define reflections, are equivalent iff they can be obtained
from each other by a translation in the direction perpendicular to the
line-flipper (this is the direction of the reflection mirror).

\section{Head to tail composition method}\label{s3}

\subsection{Generalities.}\label{s3.0}
If  $S=I\circ J$, $T=J\circ K$, and $J$ is an involution,
then, obviously, $S\circ T=I\circ J^2\circ K=I\circ K$. 

Therefore if isometries $T$ and $S$ are encoded by biflippers 
$\overrightarrow{AB}$ and  $\overrightarrow{BC}$, respectively, then 
 $\overrightarrow{AC}$ encodes $S\circ T$. Here it is important that 
the head flipper of the first biflipper coincides with the tail flipper 
of the second. 

If isometries $T$ and $S$ can be represented by such biflippers, we say
that they {\sfit admit head to tail composition.\/} An algorithm for finding 
biflippers of this kind is called a {\sfit head to tail composition method\/}. 

The head to tail composition methods depend on the types of the composed 
isometries. It's remarkable that a head to tail composition 
method exists for any pair of isometries of the Euclidean plane and in many
other similar setups. However, as we show below, for some pairs of isometries
of the hyperbolic plane and Euclidean 3-space it does not exist. 

The problem of finding a biflipper, which encodes a composition of
isometries, given biflippers, that encode these isometries, is solved 
easily in those two situations by slightly more complicated methods. 
See sections \ref{s5} and \ref{s7} below. 

\subsection*{\sffamily Added in August 2015}\label{s3a5}
{\sffamily
{\em Isometries that admit tail to head composition are known as linked.} 
The term {\em linked} was introduced  by Basmajian and 
Maskit \cite{BM}, although the concept appeared before in the context of
low dimensional hyperbolic geometry. 
 Strongly invertible isometries $T$ and $S$ are called {\em linked\/} 
if there exist their presentations as compositions of involutions with 
the same involution involved in presentations of both $T$ and $S$. 

Observe that if $T=\Ga\Gb$ is such a
presentation, then there exist involutions $\Gg$ and $\Gd$ such that
$T=\Gg\Ga$ and $T=\Gb\Gd$. Namely, $\Gg=T\Ga^{-1}=\Ga\Gb\Ga^{-1}$ and
$\Gd=\Gb^{-1}T=\Gb^{-1}\Ga\Gb$, which are involutions as elements conjugated
to involutions. Cf. \cite{Silverio}.

 Hence if $T$ and $S$ are linked and $\Gb$ is an involution involved in
presentation of each of them as products of two involutions, then 
$T$ and $S$ admit  
presentations as products of involutions $T=\Ga\Gb$ and $S=\Gb\Gg$ 
and hence admit head to tail composition. 

Basmajian and Maskit \cite{BM} proved that, in dimensions $>3$, 
almost every pair of orientation preserving isometries of a classical space 
form (Euclidean,
elliptic or hyperbolic) is not linked. They proved that this is the
case also for the Euclidean 3-space. However the proof contains a mistake 
and, in fact, as is shown below in Section \ref{s7.6}, the statement is 
not true: any two orientation
preserving isometries of the Euclidean 3-space are linked.

Beyond the general underlying 
notions, this paper have almost no intersections with \cite{BM}.
The author was interested mainly in low-dimensional cases, pictures 
and algorithmic aspects.  Basmajian and Maskit  
\cite{BM} deal mainly with dimensions $>3$ and include no picture. 

Basmajian and Maskit were mostly interested in orientation
preserving isometries and their presentations as compositions of orientation
preserving involutions. Generic orientation reversing isometries appear in
\cite{BM} only once (in Corollary 1.3), orientation reversing involutions
appear only as long as they appear in decompositions of orientation
preserving isometries. In the present paper orientation reversing
isometires are not discriminated.}   

\subsection{Translations.}\label{s3.1} 
Choose biflippers, which encode the
translations, with 0-dimensional 
flippers 
and move one of them by an appropriate translation so that  
the head flipper of the first biflipper would coincide 
with the tail flipper of the second
biflipper. After that this looks as the usual head to tail addition of vectors.\\
\centerline{\includegraphics{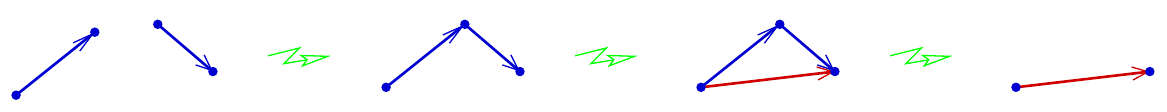}}
\subsection{Rotations.}\label{s3.2} Take any biflippers encoding the
rotations. Turn the biflippers around the centers in order to make the
head flipper in the first of them coinciding with the tail flipper in the
second. If after this the other two lines are not parallel, then
draw an arc-arrow connecting the tail flipper of the first biflipper
to the head flipper of the second biflipper. Erase the coinciding
lines and the old arc-arrows. The composition is a rotation. 
Notice that this head to tail construction gives the center of the rotation  
composition.\\
\centerline{\includegraphics{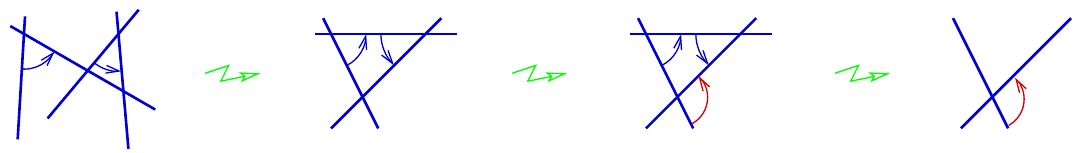}}
If the tail flipper of the first biflipper is parallel to the head flipper
of the second biflipper, then after erasing everything besides these
two parallel lines and connecting them with an arrow perpendicular to them, 
we obtain a
biflipper for a translation.\\ 
\centerline{\includegraphics{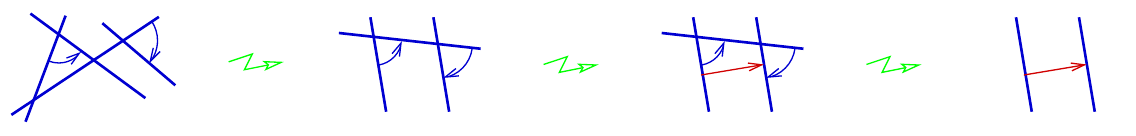}}
\subsection{Translation followed by rotation.}\label{s3.3} Choose a biflipper 
representing the 
translation to be formed by lines. Turn a biflipper representing
the rotation to make the flipper perpendicular to the
direction of the translation (i.e., parallel to the lines which form
 the biflipper representing the translation). By a
translation of the biflipper representing the translation, superimpose 
the appropriate lines.\\
\centerline{\includegraphics{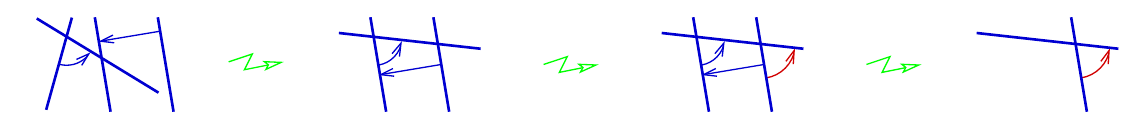}}
\subsection{Translation followed by glide reflection.}\label{s3.4}
The composition is a glide reflection. Here is how to find its
biflippers. \\    
\centerline{\includegraphics{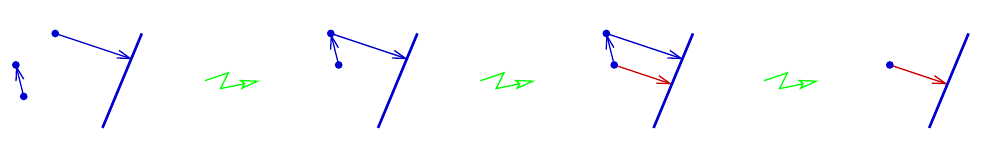}}
\subsection{Two glide reflections with non-parallel axes.}\label{s3.5}
 The composition 
is a rotation. Here is how to find its biflipper.\\
\subsection{Two glide reflections with parallel axes.}\label{s3.5'}
\centerline{\includegraphics{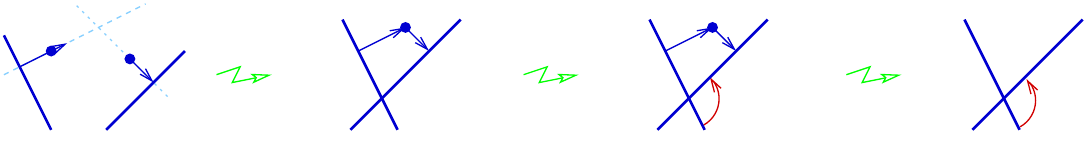}}
The composition is a translation. Here is how to find its biflipper. \\
\centerline{\includegraphics{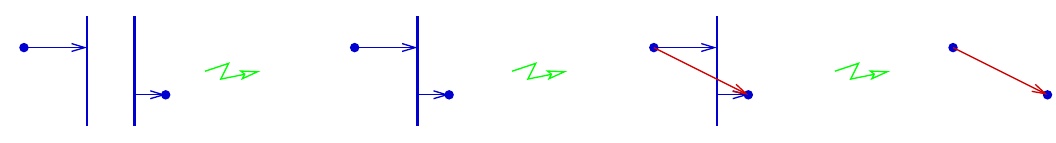}}

\subsection{Generators and relations for the plane isometry group}\label{s3.6}
Non\-uniqueness of flip-flop decompositions of a translation implies that
the composition $F_l\circ F_m$ of reflections in parallel lines $l$ and $m$
does not change if one
replaces $l$ and $m$  by their images  $l'$ and $m'$ under any translation.
Thus we have a relation $F_l\circ F_m=F_{l'}\circ F_{m'}$.

Similar relations follow from non-uniqueness of flip-flop decompositions of
a rotation. Namely, if lines $l$ and $m$ intersect at a point $A$ and lines
$l'$ and $m'$ are their images under some rotation about $A$, then  
$F_l\circ F_m$ and $F_{l'}\circ F_{m'}$ are flip-flop decompositions of the
same rotation, and hence   $F_l\circ F_m=F_{l'}\circ F_{m'}$.

Observe that in both situations the lines $l$, $m$, $l'$ and $m'$ belong to
a pencil: in the first situation they are all parallel to each other, in
the second, they have common point $A$. We will call them {\sfit pencil 
relations\/}. Besides, any flip is an involution. Thus $F_l^2=\id$.

\begin{thm}[O.Viro \cite{Viro}]\label{thRelCompl}
 Any relation among reflections in the group of isometries of the 
Euclidean plane is a corollary of pencil and involution relations.
\end{thm}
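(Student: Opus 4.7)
The plan is to let $\tilde G$ denote the abstract group with generators $\tilde F_l$ (one per line $l\subset\mathbb E^2$) subject to the involution relations $\tilde F_l^2=1$ and the pencil relations of Section \ref{s3.6}. The canonical homomorphism $\pi\colon\tilde G\to\mathrm{Isom}(\mathbb E^2)$ sending $\tilde F_l\mapsto F_l$ is surjective, since every plane isometry is classically a product of at most three line reflections. The theorem is equivalent to injectivity of $\pi$.

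I would first show that every element of $\tilde G$ is represented by a word of length at most three. The key case is reducing a length-four word $\tilde F_{l_1}\tilde F_{l_2}\tilde F_{l_3}\tilde F_{l_4}$ to length two: the idea is to use pencil moves to rewrite the two halves as $\tilde F_{l_1'}\tilde F_m$ and $\tilde F_m\tilde F_{l_4'}$ sharing a common middle generator, which then cancels by involution. The biflipper classification of Section \ref{s2.2} supplies the admissible choices of $m$: any line through the center of a rotation, any line perpendicular to the vector of a translation. This handles all subcases except two translations with nonparallel vectors, where no single line is perpendicular to both directions. For that outlier I would introduce inside $\tilde G$ an abstract point-flip $\tilde F_P:=\tilde F_a\tilde F_b$ for a perpendicular pair $a,b$ through $P$; the pencil relation at $P$ with a $90^\circ$ rotation yields $\tilde F_a\tilde F_b=\tilde F_b\tilde F_a$, so $\tilde F_P$ is a well-defined involution in $\tilde G$. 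Every translation then admits a point-type representation $\tilde F_Q\tilde F_P$ with $Q-P=v/2$, and composing two such translations with matched endpoint $P_2=Q_1$ collapses the middle pair directly.

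Second, I would prove injectivity of $\pi$ on words of length at most three. Orientation parity separates even-length from odd-length words, so only same-parity words can share an image. For two length-$2$ words with the same image, the biflipper equivalence classification of Section \ref{s2.2} lists all equivalences, each realized by a pencil move through Theorems \ref{th01} and \ref{th02}. If $F_{l_1}F_{l_2}F_{l_3}=F_{l_4}$ as isometries, then $F_{l_1}F_{l_2}=F_{l_4}F_{l_3}$ is a rotation or translation whose admissible flippers lie in a single pencil, so $l_1,l_2,l_3,l_4$ lie in one pencil and one pencil move delivers the identification in $\tilde G$. Two length-$3$ words encoding the same glide reflection can each be brought by successive pencil moves and point-flip identifications to a canonical form with middle generator the glide axis, after which they literally agree.

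The main obstacle will be the nonparallel-translation case of length reduction. It requires deriving the perpendicular-line commutation $\tilde F_a\tilde F_b=\tilde F_b\tilde F_a$ inside $\tilde G$ purely from the pencil relations, and then verifying that point-type and line-type biflippers of the same translation represent the same element of $\tilde G$. The length-$3$ normalization involves some case splitting but is otherwise mechanical, and the reduction of arbitrary words to length at most three then follows by induction on length once the length-four case is settled.
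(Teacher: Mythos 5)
Your proposal is correct, and its overall architecture is the paper's: Lemma \ref{lem1} is exactly your length-four reduction (rewrite the two halves by pencil moves so that they share a middle line, then cancel it by the involution relation), and the theorem is finished, as you do, by the orientation-parity argument. The one genuine divergence is in the exceptional subcase you rightly single out, two translations with non-parallel vectors. The paper stays entirely inside line reflections: the two \emph{middle} lines then intersect, so their product is a rotation, and rotating that middle pair by a right angle about the intersection point (a single pencil move) makes each outer line perpendicular to its new neighbour, turning both halves into rotations, after which the generic head-to-tail step applies. You instead adjoin point flips as derived involutions $\tilde F_P=\tilde F_a\tilde F_b$ of $\tilde G$ and run the point-type head-to-tail rule of Section \ref{s3.1}; this works and is geometrically natural, but it obliges you to verify, purely from pencil and involution relations, well-definedness of $\tilde F_P$, the commutation $\tilde F_a\tilde F_b=\tilde F_b\tilde F_a$, and the agreement of point-type and line-type representatives of a translation --- all of which the right-angle trick bypasses. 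Finally, your second phase (injectivity of $\pi$ on all words of length at most three) is more than the theorem needs: once every word reduces to length at most three, a word mapping to the identity has even length by parity, and a length-two word equal to the identity forces its two lines to coincide, which is an involution relation; only this kernel case has to be examined, so the classification of equivalent length-two and length-three words, while true, can be dropped.
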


\begin{lem}\label{lem1}
Any composition of four reflections can be converted 
by pencil and involution relations into a composition of two reflections.
\end{lem}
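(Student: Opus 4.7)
The plan is to reduce $F_a\circ F_b\circ F_c\circ F_d$ by engineering the middle two reflections to become reflections in one and the same line $\ell$, so that the involution relation $F_\ell\circ F_\ell=\id$ eliminates them and only two reflections remain.

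The pencil relation allows me to rigidly rotate the pair $(a,b)$ inside its pencil (the lines through $a\cap b$ if $a,b$ meet, or the lines parallel to $a$ if $a\parallel b$), so that for every line $\ell$ in this pencil I can write $F_a\circ F_b=F_{a'}\circ F_\ell$ with $a'$ in the same pencil; the analogous statement holds for $(c,d)$. Therefore, if a single line $\ell$ belongs to \emph{both} pencils, then
$$
F_a\circ F_b\circ F_c\circ F_d=F_{a'}\circ F_\ell\circ F_\ell\circ F_{d'}=F_{a'}\circ F_{d'},
$$
and we are done.

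The next step is to see when such an $\ell$ exists. Two point-pencils with centers $P_1,P_2$ share the line $P_1P_2$ (or every line through the common center when $P_1=P_2$); a point-pencil at $P$ and a parallel-pencil in direction $u$ share the unique line through $P$ in direction $u$; the only obstruction is when both pencils are parallel-pencils in \emph{different} directions, i.e.\ $a\parallel b$, $c\parallel d$, and $a\not\parallel c$.

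The hard part is this exceptional case, and the plan for it is to perform an auxiliary pencil move on the middle pair $(b,c)$ before attacking the outer pairs. Since $b\parallel a$ and $a\not\parallel c$, the lines $b$ and $c$ meet at a point $R$. Using the pencil relation at $R$, I replace $(b,c)$ by $(b',c')$ obtained from $(b,c)$ by a rotation about $R$ through an angle $\theta\not\equiv 0\pmod{\pi}$. For such $\theta$, $b'$ has a direction distinct from that of $a$, and $c'$ has a direction distinct from that of $d$, so both $(a,b')$ and $(c',d)$ are intersecting pairs whose pencils are point-pencils. By the preceding paragraph these two point-pencils share a common line, and the main argument completes the reduction. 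Degenerate subcases, where two consecutive lines happen to coincide, collapse immediately via the involution relation.
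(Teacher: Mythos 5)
Your proof is correct and follows essentially the same route as the paper's: you reduce to finding a line common to the two pencils determined by the outer pairs, and in the one obstructed case (both outer pairs parallel, in different directions) you rotate the middle pair about its intersection point by a pencil relation --- exactly the paper's move, which uses a right angle where you allow any $\theta\not\equiv 0\pmod\pi$. Your formulation via ``common line of two pencils'' is a slightly cleaner unification of the case analysis, but the substance is identical.
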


\begin{proof}
Consider a composition  $F_n\circ F_m\circ F_l\circ F_k$ of four reflections.
If any two consecutive lines coincide (i.e., $k=l$, or $l=m$, or $m=n$), 
then, by applying the involution relation, we can eliminate the
corresponding reflection from the composition. So, in the rest of the
proof, we assume that none of consecutive lines coincide.

Compositions $F_n\circ F_m$ and $F_l\circ F_k$ are flip-flop decompositions
of translations or rotations. If at least one of them is a rotation, we can
apply head to tail rule for identification of their composition. Notice
that all the transformations used in this method are applications of the
pencil and involution relations.

Let both $F_n\circ F_m$ and $F_l\circ F_k$ be translations. Then
$k\parallel l$ and  $m\parallel n$. If  $l\cap m\ne\empt$, then by rotating 
the middle pair of lines $l,m$ by right angle we replace them by lines $l'$
and $m'$ such that $k\perp l'$ and $m'\perp n$ and obtain
the situation of two rotations for which head to tail method works.\\ 
\centerline{$\vcenter{\hbox{\begin{picture}(0,0)%
\includegraphics{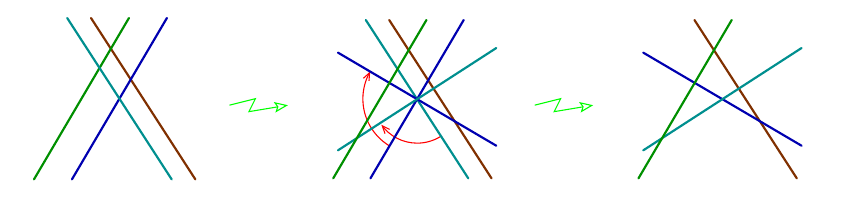}%
\end{picture}%
\setlength{\unitlength}{2735sp}%
\begingroup\makeatletter\ifx\SetFigFont\undefined%
\gdef\SetFigFont#1#2#3#4#5{%
  \reset@font\fontsize{#1}{#2pt}%
  \fontfamily{#3}\fontseries{#4}\fontshape{#5}%
  \selectfont}%
\fi\endgroup%
\begin{picture}(5866,1532)(-14,-602)
\put(4186,-496){\makebox(0,0)[lb]{\smash{{\SetFigFont{8}{9.6}{\rmdefault}{\mddefault}{\itdefault}{\color[rgb]{0,.56,0}$k$}%
}}}}
\put(5536,-496){\makebox(0,0)[lb]{\smash{{\SetFigFont{8}{9.6}{\rmdefault}{\mddefault}{\updefault}{\color[rgb]{.5,.17,0}$n$}%
}}}}
\put(5401,344){\makebox(0,0)[lb]{\smash{{\SetFigFont{8}{9.6}{\rmdefault}{\mddefault}{\updefault}{\color[rgb]{0,.56,.56}$m'$}%
}}}}
\put(4321,344){\makebox(0,0)[lb]{\smash{{\SetFigFont{8}{9.6}{\rmdefault}{\mddefault}{\updefault}{\color[rgb]{0,0,.69}$l'$}%
}}}}
\put(  1,-496){\makebox(0,0)[lb]{\smash{{\SetFigFont{8}{9.6}{\rmdefault}{\mddefault}{\itdefault}{\color[rgb]{0,.56,0}$k$}%
}}}}
\put(451,-496){\makebox(0,0)[lb]{\smash{{\SetFigFont{8}{9.6}{\rmdefault}{\mddefault}{\updefault}{\color[rgb]{0,0,.69}$l$}%
}}}}
\put(2071,-496){\makebox(0,0)[lb]{\smash{{\SetFigFont{8}{9.6}{\rmdefault}{\mddefault}{\itdefault}{\color[rgb]{0,.56,0}$k$}%
}}}}
\put(2521,-496){\makebox(0,0)[lb]{\smash{{\SetFigFont{8}{9.6}{\rmdefault}{\mddefault}{\updefault}{\color[rgb]{0,0,.69}$l$}%
}}}}
\put(3061,-496){\makebox(0,0)[lb]{\smash{{\SetFigFont{8}{9.6}{\rmdefault}{\mddefault}{\updefault}{\color[rgb]{0,.56,.56}$m$}%
}}}}
\put(3421,-496){\makebox(0,0)[lb]{\smash{{\SetFigFont{8}{9.6}{\rmdefault}{\mddefault}{\updefault}{\color[rgb]{.5,.17,0}$n$}%
}}}}
\put(991,-496){\makebox(0,0)[lb]{\smash{{\SetFigFont{8}{9.6}{\rmdefault}{\mddefault}{\updefault}{\color[rgb]{0,.56,.56}$m$}%
}}}}
\put(2206,344){\makebox(0,0)[lb]{\smash{{\SetFigFont{8}{9.6}{\rmdefault}{\mddefault}{\updefault}{\color[rgb]{0,0,.69}$l'$}%
}}}}
\put(3286,344){\makebox(0,0)[lb]{\smash{{\SetFigFont{8}{9.6}{\rmdefault}{\mddefault}{\updefault}{\color[rgb]{0,.56,.56}$m'$}%
}}}}
\put(1351,-496){\makebox(0,0)[lb]{\smash{{\SetFigFont{8}{9.6}{\rmdefault}{\mddefault}{\updefault}{\color[rgb]{.5,.17,0}$n$}%
}}}}
\end{picture}%
}}$}
If all the lines are parallel, then by a translation of $k\cup l$ such that
the image of $l$ would coincide with $m$ and applying pencil and involution
relations as above, we can reduce the number of reflections.
\end{proof}
\begin{proof}[\bf Proof of Theorem \ref{thRelCompl}]
Take any relation $F_1\circ F_2\circ \dots\circ F_n=\id$ among reflections.
By Lemma, 
we may reduce its length $n$ to a number which is less than four
by applying pencil and involution relations. The length cannot
become three, because a composition of an odd number of reflections 
reverses the
orientation, and hence cannot be equal to the identity. 
The only composition of two reflections which is the identity is an
involution relation.
\end{proof}

Theorem \ref{thRelCompl} gives a presentation by generators and relations
for the group of plane isometries. The generators in this presentation 
are reflections in all
the lines. This is an uncountable set.  Since the group is uncountable, 
the set of generators must be uncountable, too. Though, this set has 
quite a simple structure. The set of relations has  cardinality of the
continuum, too. It consists of all pencil and involution relations.
Geometrically the relations are easy to deal with. It is so powerful 
that allows to
present any isometry as a product of at most three generators.

Enlarging the system of generators by inclusion all the flips (i.e., by
addition of
symmetries about points) makes it even more powerful: any plane isometry 
admits a presentation as a product of two flips. The set of relations 
has to be enlarged by expressions of symmetries about points as
compositions of reflections: the symmetry about a point $A$ (being a
rotation by $\pi$ about $A$) is the composition of reflections in any two
perpendicular lines intersecting at $A$.

The following theorem ensures an extra comfort in graphical operations in
this presentation of the plane isometry group.


\begin{thm}\label{th1}
Any pair of plane isometires admits a head to tail composition. In other
words, any isometries $S$ and $T$ of Euclidean plane are presented by 
biflippers $\overrightarrow{AB}$ and $\overrightarrow{BC}$, respectively. 
\end{thm}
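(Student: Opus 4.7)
The plan is to reduce the theorem to a direct case analysis. First, I will establish, for each kind of plane isometry, the catalogue of those flippers which can serve as the head flipper of \emph{some} biflipper encoding it. Using the equivalence classification of Section~\ref{s2.2} together with a direct calculation of $F_B\circ F_A$, this catalogue is as follows. For the identity every flipper is admissible. For a nonzero translation by $v$ the admissible head flippers are all points of the plane together with all lines perpendicular to $v$. For a rotation by $\theta\neq 0$ about a point $O$ they are precisely the lines through $O$ (and also the point $O$ itself when $\theta=\pi$). For a reflection in a line $\ell$, and identically for a glide reflection along $\ell$, they are all points of $\ell$ together with all lines perpendicular to $\ell$.

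Since reversing a biflipper inverts the encoded isometry, the admissible \emph{tail} flippers of biflippers for $T$ coincide with the admissible head flippers of biflippers for $T^{-1}$, and $T^{-1}$ belongs to the same kind as $T$. Therefore producing a head-to-tail presentation of the pair $(S,T)$ reduces to exhibiting a single flipper $B$ lying in both catalogues; the flippers $A$ and $C$ are then forced by $F_A=F_B\circ S$ and $F_C=T\circ F_B$.

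It then remains to verify nonemptiness of the intersection for each pair of kinds. The cases involving the identity are immediate. For two translations with parallel vectors, take any common perpendicular line; for two translations with non-parallel vectors, take any point. For a translation paired with a rotation, take the line through the rotation centre perpendicular to $v$. For a translation paired with a reflection or glide reflection along $\ell$, take any point of $\ell$. For two rotations with centres $O_S$ and $O_T$, take the line $O_SO_T$ (or any line through the common centre if $O_S=O_T$). For a rotation paired with a (glide) reflection along $\ell$, take the perpendicular from the rotation centre to $\ell$. Finally, for two (glide) reflections with parallel axes, take any common perpendicular line; with non-parallel axes take the intersection point of the two axes, which is a valid flipper for both because a point on the axis is admissible as head flipper for a (glide) reflection.

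The algorithms in Subsections~\ref{s3.1}--\ref{s3.5'} already carry out many of these constructions pictorially, so the essential work is the catalogue and the observation that it always contains a common element. The main subtlety is the pair (glide reflection, glide reflection) with non-parallel axes, where no common perpendicular line exists; the resolution relies on the less obvious fact that points on the axis are admissible head flippers for (glide) reflections, which is precisely what makes the intersection point of the two axes available as $B$.
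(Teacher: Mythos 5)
Your proposal is correct and in substance coincides with the paper's own argument: both rest on the observation that the set of flippers that can occupy the middle position for a given isometry is a large enough family (the orbit under the centralizer, cf.\ Theorem~\ref{th02}, together with the choice of dimension) that any two such families meet. Your explicit catalogue of admissible head/tail flippers and the pairwise intersection check is just a more systematic and complete write-up of the paper's sketch, including a careful treatment of the case of two glide reflections with intersecting axes.
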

\begin{proof} 
For any plane isometry, besides a rotation, the tail flipper in a biflipper 
can be chosen to be either a point or a line, as we like, or the head flipper
can be chosen to be either a point or a line. 
(We cannot control the dimensions of both of them simulateously. ) 

Assume first that none of the two
isometries $S$ and $T$ is a rotation. Then there exist biflippers 
$\overrightarrow{KL}$ and $\overrightarrow{MN}$ representing $T$ and $S$,
respectively, such that both $L$ and $M$ are points, or both of them are
lines. By Theorem \ref{th02}, we 
can move each of the biflippers either freely or along a line. If at least 
one of them can be moved freely (the corresponding isometry is a translation)
or the lines meet, one can choose $L$ and $M$ to be points,
superimpose them, and we are done. If the
lines along which the points $L$ and $M$ can move are parallel, 
then we choose $L$ and $M$ to be lines, these lines are parallel, and 
now, by moving the biflippers, we can make the lines superimposed.   

The case of two rotations was considered above. 

If one of the isometries is a rotation, then, by rotating about the center of
the rotation, we can make the lines $L$ and $M$ parallel, and by a move of
the other biflipper superimpose $L$ and $M$.
\end{proof}

\noindent{\bf Exercise. } Find head to tail composition rules for 
the pairs of plane isometries that have not been discussed above.

\section{On the 2-sphere and projective plane}\label{s4}

\subsection{On the 2-sphere.}\label{s4.0}
The group of isometries  of the 2-sphere $S^2=\{x\in\R^3: |x|=1\}$
coincides
with the orthogonal group $O(3)$. The isomorphism maps a linear orthogonal 
transformation of $\R^3$ to its restriction.

As was mentioned in Section \ref{s1}, there are two kinds of flippers on
$S^2$: great circles and pairs of antipodal points. Proper biflippers look as
follows:\\
 \centerline{$\fig{2D-sph-frAr}$}
These biflippers encode all the non-identity isometries of $S^2$. 

\subsection{Rotations.}\label{s4.1}
A biflipper, which is formed by two great circles, defines the rotation 
about the
intersection points of the circles by the angle twice the angle between the
circles. This becomes clear, if we recall that the whole picture is cut on
$S^2$ by the picture in $\R^3$ and the corresponding biflipper in $\R^3$ 
is a pair of planes.\\ 

\noindent\parbox{.45\textwidth}{
\hspace{10pt}
A transition from a biflipper, which is made of great circles $A$ and $B$, 
to a biflipper made of
two pairs of antipodal points is described by Theorem \ref{th02}. For $C$
one has to take the great circle perpendicular to $A$ and $B$.}
\parbox{.55\textwidth}{\ $\vcenter{\hbox{\begin{picture}(0,0)%
\includegraphics{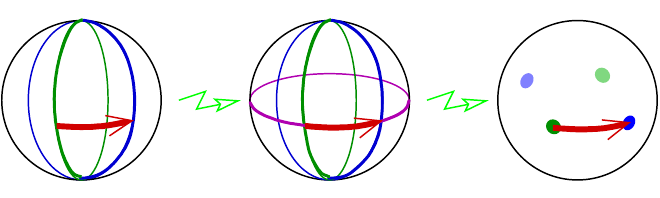}%
\end{picture}%
\setlength{\unitlength}{3729sp}%
\begingroup\makeatletter\ifx\SetFigFont\undefined%
\gdef\SetFigFont#1#2#3#4#5{%
  \reset@font\fontsize{#1}{#2pt}%
  \fontfamily{#3}\fontseries{#4}\fontshape{#5}%
  \selectfont}%
\fi\endgroup%
\begin{picture}(3346,992)(83,-512)
\put(631,-61){\makebox(0,0)[lb]{\smash{{\SetFigFont{9}{10.8}{\rmdefault}{\mddefault}{\updefault}{\color[rgb]{0,0,1}$B$}%
}}}}
\put(1891,-61){\makebox(0,0)[lb]{\smash{{\SetFigFont{9}{10.8}{\rmdefault}{\mddefault}{\updefault}{\color[rgb]{0,0,1}$B$}%
}}}}
\put(1688,140){\makebox(0,0)[lb]{\smash{{\SetFigFont{9}{10.8}{\rmdefault}{\mddefault}{\updefault}{\color[rgb]{.69,0,.69}$C$}%
}}}}
\put(1496,-61){\makebox(0,0)[lb]{\smash{{\SetFigFont{9}{10.8}{\rmdefault}{\mddefault}{\updefault}{\color[rgb]{0,.56,0}$A$}%
}}}}
\put(239,-63){\makebox(0,0)[lb]{\smash{{\SetFigFont{9}{10.8}{\rmdefault}{\mddefault}{\updefault}{\color[rgb]{0,.56,0}$A$}%
}}}}
\end{picture}%
}}$}\vspace{5pt}

Biflippers, that encode rotations and are formed of flippers of the same 
dimension, are equivalent iff they can be 
obtained from each other by a rotation of the sphere about the same axis.
This follows from Theorem \ref{th02}.

Head to tail rule is very simple:\\
\centerline{$\fig{h2tProcSphRot2}$}
It has been recently introduced in the author's preprint \cite{Viro}.

\subsection{Rotary reflections.}\label{s4.2} 
An equivalence class of biflippers for a rotary reflection is determined by an
arc-arrow on a great circle, as for a rotation. The only difference is
that 
in the case of rotation, at both end points of the arc-arrow we have
flippers 
of the same kind: either the end-points (or, rather, two pairs of 
antipodal points), 
or two great circles perpendicular to the arc-arrow, while for a
rotary reflection the flippers are of different kinds: one is a point, the
other is a great circle. It does not matter, which of them is located at
arrowhead and which at the arrow tail. Compare to the relation between
biflippers for glide reflection and translations on the plane.

Rotary reflections and rotations of $S^2$ are related even closer than
glide reflections and translations of $\R^2$. The equivalences for biflippers
in the former case coincide and can be described as obtaining 
arc-arrows from each other by a rotation along the great circle containing
the arc-arrows. In the latter case (on $\R^2$), for translations an
equivalence  class of biflippers is a free vector, while for glide symmetries 
the equivalence class of biflippers is a sliding vector.

Head to tail methods involving rotary reflections are similar to head to tail
methods involving glide reflections. For example, composition of two
rotary reflections is a rotation calculated as follows:\\
\centerline{$\fig{h2tProcSphRotorefl}$}

\subsection{Degenerations.}\label{s4.3}
Rotations and rotary reflections together fill an open everywhere dense
subset of $O(3)$. Reflections in great circles appear as degenerations of
rotary reflections, as the rotation angle vanishes. These degenerations can be
lifted to degenerations of biflipper under which the flippers moves towards
each other and the pair of antipodal points comes to the great circle. 
 
A rotation by $\pi$ is a reflection in two antipodal points (the restriction
of the reflection about a line). The corresponding biflippers are either
a pair of two orthogonal great circles, or two pairs of antipodal points
dividing the great circle passing through them into four congruent arcs.  

 The antipodal symmetry $x\mapsto -x$  
is a rotary reflection, in which the angle of the rotation
part is $\pi$. The corresponding biflippers consist of a great circle and a
pair of antipodal points polar to the circle. Polarity means here that the
great circle is the intersection of $S^2$ with a plane orthogonal to the
line which intersects $S^2$ at the pair of points. 

In the first two degenerations, the equivalence of biflippers and head to
tail methods appear as limits and do not require additional considerations.
In the last degeneration there is only one isometry and all the biflippers
formed by a great circle and a pair of antipodal points polar to each other 
are equivalent. 

\subsection{On the projective plane.}\label{s4.4}

Projective plane considered as the quotient space of $S^2$ by the antipodal
symmetry inherits a metric from $S^2$. Isometries of the projective plane 
are covered by isometries of $S^2$. In particular, flips are covered
by flips. However, flips of the two kinds are covered by
flips of the same type, with the flipper consisting of a projective line
and its polar - the point the most distant from the projective line.

The whole flipper is determined by this point - the rest of the flipper can
be recovered as the polar of the point.
Any isometry is determined by a biflipper, which, in turn, is determined by 
an arrow connecting the 0-dimensional parts of the mirrors. 

Sliding of an arrow along its line does not change the equivalence class of
the corresponding biflipper. Therefore for composing of isometries, one can
use the obvious  head to tail rule.  

\section{ Self-contained digression on quaternions}\label{s4.5}

\subsection{Quaternions.}
Any story about rotations of the 3-space would be incomplete without
quaternions. Unit quaternions provide a convenient para\-met\-ri\-zation for the
group of rotations. Biflippers give another presentation of the same group.
The quaternion parametrization is more convenient for calculations. The
biflippers seems to be more visual and intuitive. We will relate the two
pictures below.  In this section, all the information about quaternions, that is
needed for this, is presented in all the details with brief, but complete proofs.

Quaternions form a 4-dimensional associative algebra over the field of real
numbers. The algebra of quaternions is denoted by  $\mathbb H$. 
As a vector space over $\R$, it has the standard basis $1,i,j,k$. 
The generators are subject to relations $i^2=j^2=k^2=ijk=-1$. 
A quaternion expanded in the standard basis  
is $a+bi+cj+dk$, where $a,b,c,d\in\R$. 
The quaternion addition is component-wise. The quaternion 
multiplication is associative
and the products of the generators are calculated according to formulas
$ij=k$, $ji=-k$, $jk=i$, $kj=-i$, $ki=j$ and $ik=-j$ (these formulas follow 
from the relations  $i^2=j^2=k^2=ijk=-1$).     

\subsection{Scalars and vectors.}
The field $\R$ is contained in $\Qu$ as $\{a+0i+0j+0k\mid a\in\R\}$. \ A
quaternion of the form $a + 0i + 0j + 0k$, \ is called {\sfit real\/}. \ 
A quaternion of the form $0 + bi + cj + dk$, where $b, c, d\in\R$ 
is called {\sfit pure imaginary\/}. If $q=a + bi + cj + dk$ is any quaternion, then $a$ 
is called its {\sfit scalar part\/} and denoted by $q_s$ and 
$bi + cj + dk$ is called its {\sfit vector part\/} and denoted by $q_v$. 
The set of purely imaginary quaternions  $bi + cj + dk$ 
is identified with the real 3-space $\R^3$. 

\subsection{Multiplication of quaternions.}
The set of real quaternions is the center of $\Qu$. Multiplication of
quaternions is composed of all the standard multiplications of factors which
are real numbers and vectors: multiplications of real numbers,
multiplication of a vector by a real number and dot and cross products of
vectors. It is not accident: the very notion of vector and all the
operations with vectors were introduced by Hamilton after invention of
quaternions.

\noindent
{\bf Quaternion product of vectors.} {\sfit Let
$p=ui+vj+wk$ and $q=xi+yj+zk$ be vector quaternions. Then
$pq=-p\cdot q+p\times q$. \/}
Indeed, \vspace{-10pt}
\begin{multline*}pq=
(ui+vj+wk)(xi+yj+zk)\\=-(ux+vy+wz)+(vz-wy)i+(wx-uz)j+(uy-vx)k\\
=-p\cdot q+p\times q \qed 
\end{multline*}

\noindent
{\bf Product of quaternions via other products.} {\sfit For any $p,q\in\Qu$}\\ 
\vspace{-15pt}
\begin{multline*}
pq=(p_s+p_v)(q_s+q_v)=p_sq_s+p_sq_v+p_vq_s+p_vq_v\\
=p_sq_s+ p_sq_v+q_sp_v-p_v\cdot q_v+p_v\times q_v\\
=p_sq_s-p_v\cdot q_v + p_sq_v+q_sp_v+p_v\times q_v \qed
\end{multline*}  

\subsection{Conjugation.}
The map  $\Qu\to\Qu:q\mapsto q^*=q_s-q_v$ is called {\sfit conjugation.\/} The
conjugation is an antiautomorphism of $\Qu$ in the sense that it is an
automorphism of $\Qu$ as a real vector space and $(pq)^*=q^*p^*$. The latter
is verified as follows:\vspace{-10pt}
\begin{multline*}
(pq)^*=\left(p_sq_s-p_v\cdot q_v + p_sq_v+q_sp_v+p_v\times q_v\right)^*\\
=p_sq_s-p_v\cdot q_v - p_sq_v-q_sp_v-p_v\times q_v\\
=p_sq_s-(-p_v)\cdot(- q_v)+p_s(-q_v)+q_s(-p_v)+(-q_v)\times(-p_v)\\
=(q_s-q_v)(p_s-p_v)=q^*p^*.
\end{multline*}  

\subsection{Norm.} 

 The product $q^*q$ is a non-negative real number
for any non-zero quaternion $q$.
Indeed, $(q^*q)^*=q^*(q^*)^*=q^*q$.

If \ $q=a+bi+cj+dk$, then \ $q^*q=a^2+b^2+c^2+d^2$. \ Indeed, \ 
$q^*q=q_s^2-(-q_v)\cdot q_v+q_sq_v+q_s(-q_v)+(-q_v)\times q_v=q_s^2+q_v\cdot
q_v=a^2+b^2+c^2+d^2$. 

The number $\sqrt{q^*q}$ is called the {\sfit norm\/}
of $q$ and denoted by $|q|$. It is the Euclidean distance from $q$ to the
origin in $\R^4$. \smallskip

\noindent
{\sfit The norm is a multiplicative homomorphism $\Qu\to\R$.\/} \
Indeed,
$|pq|=\sqrt{pq(pq)^*}=\sqrt{pqq^*p^*}=\sqrt{p(qq^*)p^*}=\sqrt{pp^*}\sqrt{qq^*}=|p||q|$.

\section{The group of unit quaternions.}\label{s4.6}

\subsection{Unit quaternions.}
The sphere $S^3=\{q\in\Qu\mid|q|=1\}$, being the kernel of the
multiplicative homomorphism $\Qu\to\R:q\mapsto|q|$, is a multiplicative subgroup of $\Qu$. 
The inverse to a
quaternion $q\in S^3$ coincides with $q^*$. Indeed, $|q|=\sqrt{qq^*}=1$,
hence $qq^*=1$.

Unit vector quaternions form the unit 2-sphere $S^2$ in $\R^3$. It 
is contained in $S^3$ as an equator. The unit vectors are very special
quaternions.

\begin{thm}\label{vectGen}
Each unit quaternion can be presented as a product of two unit vectors.
Moreover, if $q$ is a unit quaternion and $v$ is a unit vector
perpendicular to $q_v$, then there exist unit vectors $w_+$ and $w_-$ such
that $q=vw_+=w_-v$.
\end{thm}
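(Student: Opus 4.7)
The plan is to deduce the main statement from the moreover clause: for any unit quaternion $q$, the vector part $q_v$ lies in $\R^3$, and its orthogonal complement contains a unit vector (if $q_v=0$, any unit vector will do), so the moreover clause supplies the required decomposition. I therefore concentrate on the moreover clause.

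The candidates for $w_\pm$ are essentially forced by the equations themselves: $vw_+=q$ and $w_-v=q$ have the unique solutions $w_+=v^{-1}q$ and $w_-=qv^{-1}$. Since $v$ is a pure-imaginary unit quaternion, $v^*=-v$ and $v^{-1}=v^*/|v|^2=-v$, so $w_+=-vq$ and $w_-=-qv$. What remains is to check that both are unit vectors, i.e., purely imaginary quaternions of norm one.

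The norm part is immediate from the multiplicativity of $|\cdot|$ proved in the preceding subsection: $|w_\pm|=|v^{-1}||q|=1$. To see that the scalar part vanishes, I would expand using the product-in-scalar-and-vector-parts formula established above:
$$w_+ = -vq = -v(q_s+q_v) = -q_s v - vq_v = -q_s v + v\cdot q_v - v\times q_v.$$
The scalar piece $v\cdot q_v$ vanishes precisely because $v\perp q_v$, leaving $w_+=-q_s v - v\times q_v$, a pure-imaginary quaternion. The computation for $w_-$ is symmetric and yields $w_-=-q_s v + v\times q_v$, again purely imaginary. Finally, one verifies directly that $vw_+ = -v^2 q = q$ and $w_- v = -qv^2 = q$, using $v^2=-1$.

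The main obstacle is conceptual rather than computational: one must recognise where the perpendicularity hypothesis is needed. Inspection of the display above shows that the scalar part of $vq$ equals $-v\cdot q_v$, so without $v\perp q_v$ the quaternion $w_+$ would carry a stray scalar term and fail to be a vector. The theorem is in effect the observation that this scalar is the sole obstruction to writing $q$ as $vw$ with $w$ a unit vector, and the freedom to choose $v$ anywhere on the great circle $q_v^\perp\cap S^2$ is exactly what is needed to annihilate it.
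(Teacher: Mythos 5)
Your proof is correct, and it produces literally the same quaternions as the paper: your $w_+=-q_sv-v\times q_v$ and $w_-=-q_sv+v\times q_v$ coincide with the paper's $w_+=-v\cos\alpha+(u\times v)\sin\alpha$ and $w_-=-v\cos\alpha-(u\times v)\sin\alpha$ once one writes $q=\cos\alpha+u\sin\alpha$. The only difference is the direction of the logic. The paper guesses the formulas for $w_\pm$ outright and then spends its effort verifying $vw_+=q$ and $w_-v=q$ via the triple-product identity $v\times(u\times v)=u$ for orthonormal $u,v$; that the $w_\pm$ are unit vectors is visible from the formulas. You instead define $w_+=v^{-1}q=-vq$ and $w_-=qv^{-1}=-qv$, so that $vw_+=q=w_-v$ is automatic from $v^2=-1$, and your effort goes into showing that these forced candidates are pure unit vectors --- the norm from multiplicativity of $|\cdot|$, and the vanishing of the scalar part from $v\perp q_v$. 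Your organization has the small advantage of making transparent both the uniqueness of $w_\pm$ given $v$ and the exact role of the perpendicularity hypothesis (it kills the lone scalar obstruction $v\cdot q_v$), while the paper's version keeps everything in the explicit trigonometric normal form it reuses later for the Euler--Rodrigues--Hamilton theorem. Your handling of the degenerate case $q_v=0$ in deducing the first sentence from the ``moreover'' clause is also fine.
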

In particular, unit vectors generate the group of unit quaternions. 
\begin{proof} Let $q\in S^3$ be a unit quaternion. Then $q=q_s+q_v$ with
$1=|q|^2=q_s^2+|q_v|^2$. Choose $\Ga\in[0,\pi]$ such that $q_s=\cos\Ga$
and $|q_v|=\sin\Ga$. Then $q=\cos\Ga+u\sin\Ga$ for some unit vector $u$. 

Take any unit vector $v$ perpendicular to $u$. Then $w_+=-v\cos\Ga+(u\times
v)\sin\Ga$ and $w_-=-v\cos\Ga-(u\times v)\sin\Ga$ are also unit vectors 
perpendiculars 
to $u$, with the required properties: $vw_+=q$ and $w_-v=q$. Indeed, 
$vw_+=v(-v\cos\Ga+(u\times v)\sin\Ga)=-v\cdot(-v\cos\Ga)+v\times(u\times
v)\sin\Ga=\cos\Ga+u\sin\Ga=q$ and $w_-v=(-v\cos\Ga-(u\times v)\sin\Ga)v=
-(-v\cos\Ga)\cdot v-(u\times v)\times v\sin\Ga=\cos\Ga+u\sin\Ga=q$  
\end{proof}

\noindent
{\bf Remark.} {\sfit Any unit vector quaternion $u$ has order four, its
multiplicative inverse coincides with the additive inverse: $u^{-1}=-u$.}\\
Indeed, let $u$ be unit vector. Then $u^2=-u\cdot u+u\times u=-1$, 
hence $u^3=-u$ and $u^4=(u^2)^2=(-1)^2=1$.\qed

\subsection{Unit quaternions as fractions of unit vectors.} By Theorem
\ref{vectGen} any unit quaternion $q$ admits presentation as product of two 
unit vector quaternions: $q=vw$. If the factors $v$ and $w$ were of order
two, we could use this presentation for head to tail rule. However the unit 
vector quaternions have order four, and we need to modify the presentation 
slightly, by replacing the product with a quotient. 

Namely, let us present a unit quaternion as a sort of {\sfit quotient\/} of two unit
vectors: $q=v^{-1}w=-vw$. By the way, this goes back to W.R.Hamilton, 
the inventor of quaternions. In his book \cite{Hamilton}, Hamilton introduced 
quaternions as quotients of vectors. 
 
\subsection{Hamilton's model for the group of unit quaternions.}
The underlying space is the 2-sphere $S^2$. Unit quaternions are
interpreted as classes of equivalent vector-arcs on $S^2$. A vector-arc 
on $S^2$ is an ordered pair of points of $S^2$ connected by an arc of great
circle and equipped with an arrowhead pointing to the second of the points.
Two vector-arcs 
are equivalent iff they either lie on the same great circle and can be 
obtained from each other by an (orientation preserving) rotation of the
great circle, or connect a pair of antipodal points. 

Multiplication is defined by the head to tail rule: given two equivalence 
classes of vector-arcs, find the intersection point of two great circles 
containing vector-arcs of the classes, find the representatives in the head
to tail position and draw the vector-arc connecting the tail of the first
representative with the head of the second. See Hamilton \cite{Hamilton}, 
Book II, Chapter I, Section 9. \\
\centerline{$\fig{h2tProcSphRot}$}
The unit with respect to this multiplication is the class of all vector-arcs 
of the zero length. 

The vector-arc connecting points $v$ and $w$ on $S^2$ represents the unit
quaternion $-vw$. Equivalent vector-arcs represent the same unit quaternion.
The product of the quaternions $-vw$, $-wx$ representing vector-arcs, which
are in head to tail position, equals $(-vw)(-wx)=vw^2x=v(-1)x=-vx$, the
quaternion corresponding to the vector-arc obtained by the head to tail
rule. 
\smallskip

\noindent
{\bf Remark 1.} Our model of the group $S^3$ is not based on flip-flop 
decomposition
for element of $S^3$ acting in itself. Indeed, $S^3$ is not
generated by involutions. There is only one involution in the group $S^3$, the
quaternion $-1$. It belongs to the center.  
A single point, i.e., an end point of an arrow
in the model of $S^3$ corresponds to an element of order 4. In the Hamilton 
model, such
elements are represented by vector-arcs occupying a quarter of a great circle.
\smallskip

\noindent
{\bf Remark 2.} The groups $S^3$ of unit quaternions is isomorphic to
$SU(2)$. The standard isomorphism maps $a+bi+cj+dk\in S^3$ to
$\begin{pmatrix}a+bi&c+di\\-c+di&a-bi\end{pmatrix}$. Thus, the Hamilton model
describes also $SU(2)$.\smallskip

\noindent
{\bf Remark 3.} A similar model on the circle $S^1$ describes $Pin^-(2)$.

\section{Two stories about one two-fold covering}\label{s4.7}

\subsection{The covering.}
The group $SO(n)$ with $n\ge3$ is known to have a unique two-fold covering. 
The covering space is known as $Spin(n)$, it is also a Lie group. For
$n=3$, the latter group is known to coincide with the group $S^3$ of unit 
quaternions. There are many descriptions of the covering $S^3\to SO(3)$. 
The Hamilton model of $S^3$ coupled with biflippers of rotations on $S^2$ 
provides probably the simplest description.

\subsection{A biflipper view on the covering.}
The description of rotations of $S^2$ via biflipper given above in Section 
\ref{s4} can be converted obviously into a model for $SO(3)$ similar to the
Hamilton's vector-arcs model for unit quaternions, see section \ref{s4.6}.

In this model, elements of $SO(3)$  are interpreted as classes of equivalent 
0-dimensional biflippers on $S^2$. Recall that a 0-dimensional biflipper on
$S^2$ is two pairs of antipodal points connected with an arc-arrow, and
two such biflippers are equivalent iff  they lie on the same great circle 
and can be obtained from each other by a rotation of the great circle.

There is an obvious two-fold covering, in which the base is this model of 
$SO(3)$ and the total space is the Hamilton model for the group of unit quaternions.
Namely, each end point of an vector-arc is completed by its antipode.
The preimage of a biflipper consists of four vector-arcs, but they split into
two pairs of equivalent vector-arcs.\\ 
\centerline{$\vcenter{\hbox{\begin{picture}(0,0)%
\includegraphics{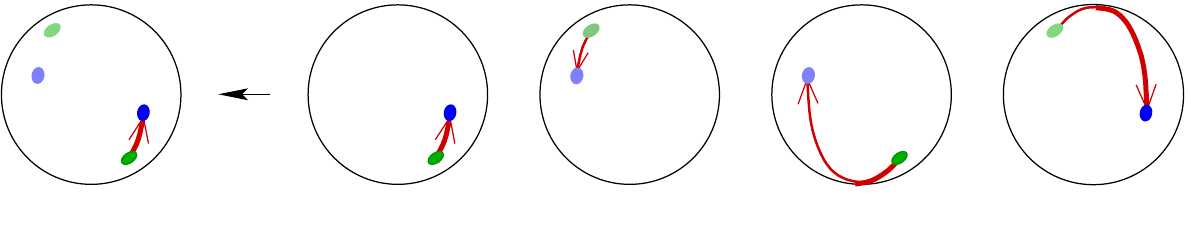}%
\end{picture}%
\setlength{\unitlength}{3149sp}%
\begingroup\makeatletter\ifx\SetFigFont\undefined%
\gdef\SetFigFont#1#2#3#4#5{%
  \reset@font\fontsize{#1}{#2pt}%
  \fontfamily{#3}\fontseries{#4}\fontshape{#5}%
  \selectfont}%
\fi\endgroup%
\begin{picture}(7128,1428)(-905,-575)
\put(-674,-466){\makebox(0,0)[lb]{\smash{{\SetFigFont{9}{10.8}{\rmdefault}{\mddefault}{\itdefault}{\color[rgb]{0,0,0}in $SO(3)$}%
}}}}
\put(2881,-511){\makebox(0,0)[lb]{\smash{{\SetFigFont{9}{10.8}{\rmdefault}{\mddefault}{\itdefault}{\color[rgb]{0,0,0}in $S^3$}%
}}}}
\put(2071,254){\makebox(0,0)[lb]{\smash{{\SetFigFont{9}{10.8}{\rmdefault}{\mddefault}{\itdefault}{\color[rgb]{0,0,0}$\sim$}%
}}}}
\put(4861,254){\makebox(0,0)[lb]{\smash{{\SetFigFont{9}{10.8}{\rmdefault}{\mddefault}{\itdefault}{\color[rgb]{0,0,0}$\sim$}%
}}}}
\end{picture}%
}}$}
This is a group homomorphism, since in both models
the multiplication is defined by the same head to tail rule.

\subsection{The action of unit quaternions in the 3-space.}
A traditional description of the same covering looks as follows.

The group $S^3$ of unit quaternions acts in $\Qu$ by formula $
\rho_q(p)=qpq^{-1}=qpq^*$. 

{\sfit This action commutes with the conjgation $p\mapsto p^*$.\/} Indeed,
$\rho_q(p^*)=qp^*q^*=((q^*)^*(p^*)^*q^*)^*=(qpq^*)^*=(\rho_q(p))^*$.

Therefore the action of $S^3$ in $\Qu$ preserves all the structures defined by
the congugation: the norm and the decomposition into scalar and vector
parts. Indeed,
$$
\rho_q(p_v)=\rho_q\left(\frac{p-p^*}2\right)=\frac{\rho_q(p)-\rho_q(p^*)}2=\frac{\rho_q(p)-\rho_q(p)^*}2=(\rho_q(p))_v,
$$
$$
\rho_q(p_s)=\rho_q\left(\frac{p+p^*}2\right)=\frac{\rho_q(p)+\rho_q(p^*)}2=\frac{\rho_q(p)+\rho_q(p)^*}2=(\rho_q(p))_s,
$$
\begin{multline*}|\rho_q(p)|=\sqrt{\rho_q(p)(\rho_q(p))^*}=\sqrt{\rho_q(p)\rho_q(p^*)}\\
=\sqrt{(qpq^*)(qp^*q^*)}=\sqrt{qpp^*q^*}=|p|.\end{multline*}
 In particular, the space $\R^3$ of vector quaternions is invariant,
and $S^3$ acts on $\R^3$ by isometries. The action is a
homomorphism $S^3\to O(3)$. Since $S^3$ is connected, the image of this
homomorphism lies in $SO(3)$.

\begin{thm}\label{tlem1}
A unit vector quaternion $v$ acts in $\R^3$
as the symmetry about the line generated by $v$.
\end{thm}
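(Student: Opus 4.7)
The plan is to verify directly that $\rho_v$ fixes the line $\R v$ pointwise and negates the orthogonal complement $v^\perp$ inside $\R^3$; since $\rho_v$ is $\R$-linear, this characterizes it as the symmetry about the line generated by $v$.

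First I would record the two consequences of $|v|=1$ that drive everything: the Remark in Section \ref{s4.6} gives $v^2=-v\cdot v=-1$, so $v^{-1}=-v$, and since $v$ is a vector quaternion, $v^*=-v=v^{-1}$. Thus $\rho_v(p)=vpv^*=-vpv$.

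Next I would treat the two invariant subspaces. On $\R v$ the computation is immediate: $\rho_v(v)=vvv^*=v(vv^*)=v|v|^2=v$, and by $\R$-linearity of $\rho_v$ every $\lambda v$ is fixed. For $p\in v^\perp$ (a vector quaternion with $p\cdot v=0$) I would apply the quaternion product of vectors formula from Section \ref{s4.5} twice. First, $vp=-v\cdot p+v\times p=v\times p$, which is again a pure vector, and moreover $(v\times p)\cdot v=0$. Therefore
\[
vpv^*=-(vp)v=-(v\times p)v=-\bigl(-(v\times p)\cdot v+(v\times p)\times v\bigr)=-(v\times p)\times v.
\]
Applying the BAC--CAB identity $(a\times b)\times c=b(a\cdot c)-a(b\cdot c)$ with $a=v$, $b=p$, $c=v$, together with $v\cdot v=1$ and $p\cdot v=0$, yields $(v\times p)\times v=p$, so $\rho_v(p)=-p$.

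Finally, every $p\in\R^3$ (identified with the vector quaternions) decomposes uniquely as $p=\lambda v+p^\perp$ with $p^\perp\in v^\perp$, and by linearity $\rho_v(p)=\lambda v-p^\perp$, which is the symmetry about the line $\R v$. The only step that is not formally routine is the cross-product identity $(v\times p)\times v=p$ for unit $v\perp p$; everything else is bookkeeping with $v^*=-v$ and the vector-quaternion product formula already established in Section \ref{s4.5}.
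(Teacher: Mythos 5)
Your proof is correct and follows essentially the same route as the paper's: fix $\R v$ via $vv v^*=|v|^2v=v$, and negate $v^\perp$ by computing $vpv^*=-(v\times p)v=-(v\times p)\times v=-p$ using the vector-quaternion product formula. The only cosmetic difference is that you invoke the general BAC--CAB identity where the paper uses its special case $(a\times b)\times a=b$ for orthogonal unit vectors.
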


\begin{proof} The statement that we are going to prove admits the following
reformulation: for the linear operator $\R^3\to\R^3:u\mapsto vuv^*$, the 
vector $v$ is mapped to itself and each unit vector $u$ orthogonal to $v$ 
is an eigenvector with eigenvalue $-1$. 

Let us verify the first statement. Since $v$ is a unit
vector, $vv^*=|v|^2=1$. Therefore $vvv^*=v$.

Now let us verify the second statement. Since $u$ is a unit vector
orthogonal to $v$, $vu=v\times u-v\cdot u=v\times u$. Therefore, 
 $vuv^*=-vuv=-(v\times u-v\cdot u)v=-(v\times u)v$. Vector $v\times u$ is
orthogonal to $v$. Therefore $-(v\times u)v=-(v\times u)\times v+(v\times
u)\cdot v=-(v\times u)\times v=-u$. The latter equality holds true, because
$(a\times b)\times a=b$ for any orthogonal unit vectors $a,b$ (e.g.,
$(i\times j)\times i=k\times i=j$).  
\end{proof}

\begin{thm}[Euler-Rodrigues-Hamilton]\label{thEulerRodrigesHamilton}
For any unit quaternion $q\in S^3$ the map
$
\R^3\to\R^3: p\mapsto qpq^*
$
is the rotation of $\R^3$ about the axis generated by a unit vector $u$ by the angle
$\theta$, where $\theta$ and $u\in\R^3$ are 
such that $q=\cos\frac\theta2+u\sin\frac\theta2$.  
\end{thm}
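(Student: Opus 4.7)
The plan is to verify $\rho_q$ separately on the axis $\R u$ and on its orthogonal complement $u^\perp$. For the axis, observe that $u^2 = -1$ implies $u$ commutes with every polynomial in $u$, and in particular with $q = \cos(\theta/2) + u\sin(\theta/2)$ and with $q^*$. Hence $\rho_q(u) = quq^* = uqq^* = u$, so the line $\R u$ is pointwise fixed.

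For the orthogonal plane, I would take any unit vector $w \perp u$ and expand $qwq^*$ using the identities for quaternion products of perpendicular vectors: $uw = u \times w$, $wu = -u \times w$, and $uwu = w$ (the last via the BAC--CAB rule $(u \times w) \times u = w$). Setting $c = \cos(\theta/2)$, $s = \sin(\theta/2)$, the expression $(c + us)\,w\,(c - us)$ collapses, after these substitutions and the double-angle formulas, to $w\cos\theta + (u \times w)\sin\theta$. This is precisely the rotation of $w$ by $\theta$ inside $u^\perp$ with orientation given by $u$, so together with the fixed axis the theorem follows.

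An alternative that stays closer to the biflipper philosophy of the paper is to factor $q = v w_+$ via Theorem \ref{vectGen}, with $v$ any unit vector in $u^\perp$ and the explicit $w_+ = -v\cos(\theta/2) + (u \times v)\sin(\theta/2)$ also lying in $u^\perp$. The homomorphism property $\rho_{pq} = \rho_p \circ \rho_q$, combined with Theorem \ref{tlem1}, then expresses $\rho_q$ as a composition of two half-turns about axes in $u^\perp$, which is automatically a rotation about $\R u$. The main obstacle in this approach is sign and orientation bookkeeping: the angle between $v$ and $w_+$ turns out to be $\pi - \theta/2$ rather than $\theta/2$, so care is needed to verify that ``twice this angle about the common perpendicular'' produces rotation by $\theta$ about $u$ with the correct orientation (and not, e.g., $-\theta$ or $2\pi - \theta$). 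For this reason I would use the factorization primarily as the conceptual explanation for why the axis is $\R u$, and finish the identification of the angle by the direct quaternion computation above.
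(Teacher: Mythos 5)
Your primary argument is correct, and it takes a genuinely different route from the paper. The paper proves the theorem by leaning on its biflipper machinery: it invokes Theorem~\ref{vectGen} to write $q=v^{-1}w=-vw$ as a quotient of unit vectors, uses Theorem~\ref{tlem1} to see $\rho_q$ as a composition of two half-turns about lines in $\R^3$, quotes the fact that such a composition is a rotation about the common perpendicular by twice the angle between the lines, and then reads off the angle and axis from $q_s=v\cdot w=\cos\frac\theta2$ and from $q_v$ being collinear with $v\times(-w)$. Your main proof instead verifies the action directly: $quq^*=u$ because $u$ commutes with $q$ and $q^*$, and the expansion of $(c+us)w(c-us)$ for $w\perp u$, using $uw=u\times w$, $wu=-(u\times w)$ and $uwu=(u\times w)\times u=w$, collapses to $w\cos\theta+(u\times w)\sin\theta$. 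I checked this computation and it is right. What your route buys is self-containedness and an unambiguous handle on orientation: the formula $w\mapsto w\cos\theta+(u\times w)\sin\theta$ pins down the sense of the rotation, which the paper's proof leaves somewhat implicit (it only concludes $q_v=u\sin\frac\theta2$ for \emph{some} unit vector $u$ on the axis). What the paper's route buys is the conceptual link to flip-flop decompositions and the covering $S^3\to SO(3)$ that the surrounding sections are built on. Your observation about the alternative factorization --- that $v\cdot w_+=-\cos\frac\theta2$, so the angle between $v$ and $w_+$ is $\pi-\frac\theta2$ and one must pass to the quotient presentation $q=v^{-1}w$ to get the clean angle $\frac\theta2$ --- is exactly the bookkeeping the paper performs, so your stated reason for preferring the direct computation is well taken.
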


\begin{proof}
By Theorem \ref{vectGen} any unit quaternion $q$ can be presented as a product
of unit vectors $v$ and $w$. In this proof it will be more convenient to
use a modification of this presentation, the fraction presentation
$q=v^{-1}w=-vw$ discussed above.
 
 By Theorem \ref{tlem1} a unit vector acts as a
symmetry about the line generated by this vector. Thus,  $\rho_q$
is the composition of the symmetries $\rho_{-v}$ and $\rho_{w}$. 
We know that the
composition of symmetries about lines is a rotation by the angle equal the
half of the angle between the lines. On the other hand, 
$q_s=(v(-w))_s=-v\cdot(-w)=v\cdot w=\cos\Ga$, where $\Ga$ is the angle
between the vectors $v$ and $w$. Thus $q_s=\cos\frac\theta2$, where $\theta$ is the
rotation angle. 

The vector part $q_v$ of the product of two unit vectors $v$ and $-w$ is
collinear to $v\times(-w)$. The cross product of
vectors is perpendicular to the vectors. On the other hand, we know that
composition of symmetries about lines is a rotation about the axis
perpendicular to the lines. Thus the vector $q_v$ is collinear to
the axis of the rotation $\rho_q$. 
The length $|q_v|$ is $|\sin\frac\theta2|$, because $|q|=1$ and
$q_s=\cos\frac\theta2$. Therefore $q_v=u\sin\frac\theta2$ for some unit 
vector $u$ collinear to the axis of rotation.  
\end{proof}

The quaternion $q$ can be written down as $a+bi+cj+dk$. It is defined by
the rotation up to multiplication by $-1$. 
Its  components $a,b,c,d$ are called the {\sfit Euler parameters\/} for this
rotation. They are calculated as follows: 
$a=\cos\frac\theta2$, $b=u_x\sin\frac\theta2$, $c=u_y\sin\frac\theta2$ 
and $d=u_z\sin\frac\theta2$, where $u_x$, $u_y$ and $u_z$ are
coordinates of the unit vector $u$ directed along the rotation axis.

Given a 0-dimensional biflipper $\overrightarrow{AB}$ defining a rotation, 
the Euler parameters of the rotations can be recovered just by choosing 
one point from $A$ and $B$, say $v\in A$ and $w\in B$ (recall that $A$ and
$B$ are pairs of antipodal points on $S^2$), and quaternion multiplication
of the representative: $q=vw$. The components of $q$ are the Euler
parameters of the rotation.

\section{On the hyperbolic plane}\label{s5}

\subsection{Well-known facts.}\label{s5.0}
We will use the Poincar\'e model, in which the hyperbolic plane is
represented by an open unit disk in $\R^2$.
In this model, a line is either a diameter of the disk or an arc cut on 
the disk by a circle orthogonal to the boundary circle of the disk. 
The boundary circle of the disk is the {\sfit absolute\/.} (The absolute is
not contained in the hyperbolic plane, it is rather something like a horizon.)

\begin{wrapfigure}[6]{r}{2.5in}
$\fig{linesOnHypPl}$
\end{wrapfigure}
On the hyperbolic plane, two lines may intersect in a single point, 
or be disjoint. 
In the latter case, their closures
on the absolute may be disjoint, and then the lines are said to be 
{\sfit ultra-parallel\/}, or have a common point, and then the lines are said
to be {\sfit parallel\/}. 

Two ultra-parallel lines have a unique common perpendicular. Conversely, 
two lines perpendicular to the same line are ultra-parallel. The set of 
lines perpendicular to the same line is called a {\sfit hyperbolic pencil\/} 
of lines. The line which is their common perpendicular is called the axis
of the pencil.

The set of all lines passing through the same point, is called an {\sfit
elliptic pencil\/} of lines. Their common points is called the center of
the pencil.

The set of all lines whose closures share the same point on the absolute
is called a {\sfit parabolic pencil\/} of lines. Their common point on the
absolute is called the center of the pencil.

Observe that lines of a pencil of any type fill the whole hyperbolic plane,
and only one line of the pencil passes  through any point except the center
of an elliptic pencil.   

There are two kinds of flippers:  lines and
 points. In the Poincar\'e model, the flip in a line, 
which is a diameter of the disk, is the restriction of the usual Euclidean 
symmetry with respect to the line of the diameter; the flip in a line, 
which is cut by a circle $C$, is a restriction  of the plane inversion with 
respect to $C$. A flip in a point is a composition of
flips in any two lines passing through this point and orthogonal to
each other. Any line and any point is a flipper.

\subsection{Biflippers.}\label{s5.1}
Proper biflippers can be formed of two points, a point and a line, which
may be disjoint or the point may belong to the line, and a pair of lines. 
In the Poincar\'e model of the hyperbolic plane proper biflippers look as
follows.\\ 
\centerline{$\fig{2D-hyp-frAr}$}

A biflipper, which encodes a rotation or parallel motion, consists of
lines. 
Except rotations and parallel motion, each isometry admit biflippers
that include a mirror of dimensions 0, and one can use the
construction of Theorem \ref{th01} in order to change the dimensions of
both flippers simultaneously and get an equivalent biflipper.  

Biflippers can be varied in their equivalence classes also in a continuous
manner according to Theorem \ref{th02}. 

\begin{thm}\label{th03}
For any isometry $S$ of the
hyperbolic plane, there is a pencil $\mathcal P(S)$ of lines, which is invariant under any 
isometry from the centralizer $C(S)$ of $S$. Any one-dimensional flipper 
from any biflipper, which encodes $S$, belongs to  $\mathcal P(S)$. 
Moreover, $C(S)$ acts transitively in the set
of lines belonging to  $\mathcal P(S)$: for any $L,M\in\mathcal P(S)$  
there exists an $T\in C(S)$  such that
$T(L)=M$. In particular, for any $L\in\mathcal P(S)$ there are biflippers
$\overrightarrow{AL}$ and $\overrightarrow{LB}$, which encode $S$. 
\end{thm}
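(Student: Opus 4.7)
The plan is to verify the theorem by a classification of isometries of the hyperbolic plane. Every such $S$ falls into one of five classes: (i) a rotation fixing a point $P$ of the plane; (ii) a parabolic isometry fixing a unique point $\xi$ of the absolute; (iii) a hyperbolic translation along an axis $m$; (iv) a reflection in a line $m$; (v) a glide reflection with axis $m$. I set $\mathcal P(S)$ to be the elliptic pencil through $P$ in case (i), the parabolic pencil at $\xi$ in case (ii), and the hyperbolic pencil with axis $m$ (i.e., the set of lines perpendicular to $m$) in cases (iii), (iv), (v). This is the canonical pencil attached to $S$ by its fixed-point configuration.

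The technical heart of the proof is the intrinsic characterization
\[
\mathcal P(S)=\{L\text{ a line}:F_L\circ S\text{ is a proper flip}\},
\]
with an analogous description using $S\circ F_L$ (the two agree by involutivity). One direction is immediate: if $\overrightarrow{AL}$ is a proper biflipper encoding $S$, then $F_L\circ S=F_A$. The other direction I would verify case by case, using the classical identification of a composition of two reflections by the mutual position of the axes and the elementary relation $F_L\circ F_m=F_P$ when perpendicular lines $L$ and $m$ meet at $P$. For instance, in case (iv), $F_L\circ F_m$ is an involution exactly when $L=m$ or $L\perp m$ and is a proper point-flip only in the latter case; in case (iii), $F_L\circ S$ is an involution iff $F_L$ conjugates $S$ to $S^{-1}$, which occurs precisely for $L\perp m$. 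Once the characterization is established, the first two assertions follow immediately, handling a $0$-dimensional head or tail via the parallel characterization in terms of $S\circ F_P$ or $F_P\circ S$ and applying Theorem \ref{th01} with a suitable perpendicular line to convert point-flippers into line-flippers in the same pencil.

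Invariance of $\mathcal P(S)$ under $C(S)$ is formal: if $TS=ST$ and $L\in\mathcal P(S)$, then $F_{T(L)}\circ S=T\circ(F_L\circ S)\circ T^{-1}$ is a flip, hence $T(L)\in\mathcal P(S)$. For transitivity I exhibit, in each case, a one-parameter subgroup of $C(S)$ acting transitively on $\mathcal P(S)$: the full rotation group at $P$ in case (i), the parabolic subgroup at $\xi$ in case (ii), and translations along $m$ in each of (iii), (iv), (v); in each case commutativity with $S$ is either automatic (the subgroup is abelian and contains $S$) or a quick direct check (translations along $m$ commute with reflection in $m$ and with any glide along $m$). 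The final statement then follows from Theorem \ref{th02}: fixing a proper biflipper $\overrightarrow{A_0L_0}$ encoding $S$ with $L_0\in\mathcal P(S)$ and choosing $T\in C(S)$ with $T(L_0)=L$, Theorem \ref{th02} gives $\overrightarrow{T(A_0)\,L}$ as a biflipper for $TST^{-1}=S$; the tail version $\overrightarrow{LB}$ is obtained symmetrically from some fixed $\overrightarrow{L_0B_0}$.

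The main obstacle I anticipate is the reverse direction of the characterization of $\mathcal P(S)$: the forward direction is routine, but ruling out lines outside the pencil as candidates for which $F_L\circ S$ is a proper involution requires delicate attention, especially in cases (iv) and (v), where proper biflippers contain only a single line, so the pencil is not visible from a single decomposition and must be pinned down by the combined constraints of involutivity and of $C(S)$-invariance. A subsidiary technicality is that the theorem tacitly restricts to proper biflippers: the improper biflipper $(m,X)$ encoding a reflection $F_m$ carries $m$ itself as a $1$-dimensional flipper, and $m$ does not lie in the hyperbolic pencil perpendicular to $m$.
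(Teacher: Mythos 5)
Your proposal is correct and follows essentially the same route as the paper: the published proof likewise runs through the five-fold classification of hyperbolic isometries and simply names the canonical pencil (elliptic at the fixed point, parabolic at the fixed ideal point, hyperbolic over the axis or mirror) in each case, leaving the membership, invariance and transitivity claims to the reader. Your intrinsic characterization of $\mathcal P(S)$ via involutivity of $F_L\circ S$, the conjugation argument for $C(S)$-invariance, and the observation that the statement must tacitly exclude improper biflippers are all details the paper omits, but they do not change the underlying approach.
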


\begin{proof} If $S$ is a translation, then it has a unique invariant line. It
is called the axis of $S$. In this case, $\mathcal P(S)$ is the hyperbolic
pencil of all lines perpendicular to the invariant line. 

If $S$ is parallel motion, then the induced map of the absolute has a unique
fixed point. Then $\mathcal P(S)$ is the parabolic pencil of all lines 
whose closure contains this point.

If $S$ is a rotation, then it has a fixed point, and $\mathcal P(S)$ is the
elliptic pencil of all lines passing through this point.

If $S$ is a glide reflection, then (as in the case of translation) 
it has a unique invariant line called the axis of $S$ and  $\mathcal P(S)$
is the corresponding hyperbolic pencil of lines.
 
If $S$ is a reflection in a line, then  $\mathcal P(S)$ is the hyperbolic
pencil of all lines perpendicular to this line. 
\end{proof}

{\sffamily
\subsection*{\sffamily Added in August 2015.}\label{s5a6}  
Compare to the notions of {\sfit
hyperbolic pencil} and {\sfit half-turn banks} introduced by Silverio in
\cite{Silverio}.}

\subsection{Head to tail methods.}\label{s5.2}
Let $S$ and $T$ be isometries of the hyperbolic plane. In this section we
describe how to find a biflipper for $S\circ T$, given biflippers of $S$ and
$T$.

If $S$ and $T$ are rotations, then we act as in the case of two rotations in
$\R^2$. Connect the fixed points of $S$ and $T$ by a line $L$. It belongs
to both $\mathcal P(S)$ and $\mathcal P(T)$. By Theorem \ref{th03} there
exist biflippers $\overrightarrow{AL}$ and $\overrightarrow{LB}$, which
encode $T$ and $S$, respectively. The biflipper $\overrightarrow{AB}$ encodes
$S\circ T$.

If $S$ or $T$ is a rotation and the other of them is not, then we act as in
the case of rotation and translation. Let, say, $S$ be a rotation. 
Choose a line from the $\mathcal P(T)$, which passes through the center of
$S$ and hence belong to $\mathcal P(S)$. Then do the same as above.

Any other situation can be easily reduced to one of these two. Choose any
biflippers $\overrightarrow{AB}$ and $\overrightarrow{CD}$ representing $T$
and $S$.  If $A$ is not a line, change both $A$ and $B$ to make $A$ a
line. 

Consider the biflipper  $\overrightarrow{BC}$ and the corresponding
isometry $U=F_C\circ F_B$. Lines of the pencil $\mathcal P(U)$ cover the
whole plane. Choose a point $Q\in A$ and a line $L\in\mathcal P(U)$ passing
through $Q$. By Theorem \ref{th03}, there is a biflipper
$\overrightarrow{LM}$ which encodes $U$. So, 
$$
S\circ T=F_D\circ F_C\circ F_B\circ F_A=F_D\circ U\circ F_A=F_D\circ
F_M\circ F_L\circ F_A
 $$
Since $L$ and $A$ intersect, either $L=A$ and then $S\circ T=F_D\circ F_M$
(and we are done), or $F_L\circ F_A$ is a rotation and we are in the
situation considered above. 

However, in most cases this reduction to rotations is not needed
and there is a direct simpler construction. Consider those cases. 

Let $S$ and $T$ be parallel motion. Then the pencils $\mathcal P(S)$
and $\mathcal P(T)$ intersect (a common line is connecting the points on
the absolute which are the centers of these pencils), and this line can be
made the head of one biflipper and the tail of the other one.

Let both $S$ and $T$ have invariant lines $L$ and $M$ (i.e., each of $S$
and $T$ is either a translation, or a glide reflection, or a reflection) and $L$, $M$ be non
parallel lines distinct from each other. Then either $L$ and $M$ intersect, 
or they are ultra parallel. In the latter case the pencils $\mathcal P(S)$
and $\mathcal P(M)$ have a common line and it can be made the head of a
biflipper encoding $T$ and tail of the biflipper encoding $S$. In the former
case, an intersection point of $L$ and $M$ can be made the head of a
biflipper encoding $T$ and tail of the biflipper encoding $S$. 

A hyperbolic pencil of lines orthogonal to line $L$ and parabolic pencil of
lines with central point $Q$ on the absolute which does not belong to (the
closure of) $L$ have a common line. This allows to achieve head to tail
biflippers in the cases when one of the isometries is a parallel motion and the
other one has invariant line not containing the center of the parabolic
pencil of the other one.  

Thus, the only situations, in which there is no biflippers for $S$ and $T$ in
head to tail position, are when either both $S$ and $T$ have invariant lines
and these lines are parallel, or one of the isometries is parallel motion the
other has an invariant line and this line is parallel to the lines of the
pencil of lines of the parallel motion. These are degenerate situations
and for them one has to make a preliminary replacement of $S$ and $T$ which
would not affect $S\circ T$. The preliminary replacement can be done so
that $T$ would be replaced by a rotation, as it was described above.
\newpage
 
\section{Back to Euclidean spaces}\label{s6}\nopagebreak
\subsection{On line.}\label{s6.1}
Any isometry of a line is either the reflection in a point or a translation.
A translation is encoded by a biflipper made of two points. A reflection in
a point can be presented by a biflipper made of the point and the whole line,
no matter in which order. 
\\   
\centerline{\includegraphics{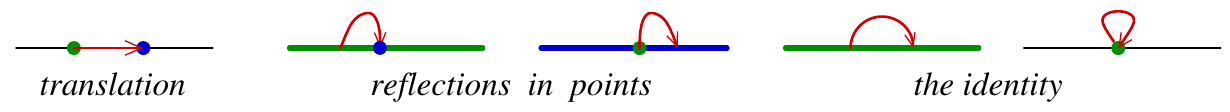}}

\subsection{Multiplication of biflippers.}\label{s6.2}
For any isometries $S:\R^k\to \R^k$ and $T:\R^l\to\R^l$, the direct product 
$S\times T:\R^k\times\R^l\to\R^k\times\R^l:(x,y)\mapsto(S(x),T(y))$ is an
isometry of $\R^{k+l}=\R^k\times\R^l$. 
If $A\subset\R^k$ and $A'\subset\R^l$ are affine subspaces, then $A\times
A'$ is an affine subspace of $\R^k\times\R^l=\R^{k+l}$ and $F_A\times
F_{A'}=F_{A\times A'}$. 

\begin{thm}\label{th2}
For any biflippers $\overrightarrow{AB}$ and $\overrightarrow{CD}$, which 
encode isometries $S:\R^k\to\R^k$ and $T:\R^l\to\R^l$, the isometry
$S\times T:\R^{k+l}\to\R^{K+l}$ is encoded by 
$\overrightarrow{(A\times C)(B\times D)}$.  
\end{thm}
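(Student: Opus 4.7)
The plan is to reduce everything to the observation already stated just before the theorem, namely $F_{A\times A'}=F_A\times F_{A'}$ for affine subspaces $A\subset\R^k$ and $A'\subset\R^l$. Since the flippers in the stated biflippers $\overrightarrow{AB}$ and $\overrightarrow{CD}$ are affine subspaces of $\R^k$ and $\R^l$ respectively, their products $A\times C$ and $B\times D$ are affine subspaces of $\R^{k+l}$, and hence are genuine flippers there.

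Next, I would simply unwind the definition of what it means for $\overrightarrow{(A\times C)(B\times D)}$ to encode an isometry: by definition this biflipper encodes $F_{B\times D}\circ F_{A\times C}$. Applying the product rule for flips in each factor gives
\[
F_{B\times D}\circ F_{A\times C}=(F_B\times F_D)\circ(F_A\times F_C).
\]
Then I would use the elementary identity $(f_1\times g_1)\circ(f_2\times g_2)=(f_1\circ f_2)\times(g_1\circ g_2)$ for maps on product sets, which is immediate from evaluating on a pair $(x,y)$. This yields
\[
(F_B\times F_D)\circ(F_A\times F_C)=(F_B\circ F_A)\times(F_D\circ F_C)=S\times T,
\]
where the last equality uses the hypotheses that $\overrightarrow{AB}$ and $\overrightarrow{CD}$ encode $S$ and $T$.

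There is essentially no obstacle here; the result is a one-line computation once the product identity for flips is invoked. The only point worth stating carefully, to keep the argument honest, is why $A\times C$ and $B\times D$ qualify as flippers of $\R^{k+l}$ (so that the notation $F_{A\times C}$ and $F_{B\times D}$ is legitimate and the uniqueness clause in the definition of flipper is satisfied). This is covered by recalling from Section \ref{s1.1} that in a Euclidean space every non-empty affine subspace is a flipper, and the product of non-empty affine subspaces is a non-empty affine subspace.
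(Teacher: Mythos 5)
Your proof is correct and is essentially the paper's proof: both rest on $F_{A\times A'}=F_A\times F_{A'}$ together with the fact that composition distributes over direct products, so that $F_{B\times D}\circ F_{A\times C}=(F_B\circ F_A)\times(F_D\circ F_C)=S\times T$. The only cosmetic difference is that the paper derives the distribution step by factoring each flip as $(F\times\id)\circ(\id\times G)$ and commuting the factors, whereas you invoke the identity $(f_1\times g_1)\circ(f_2\times g_2)=(f_1\circ f_2)\times(g_1\circ g_2)$ directly by evaluating on pairs.
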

\begin{proof}
Notice that if $F,F':\R^k\to\R^k$ commute and $G,G':\R^l\to\R^l$ commute,
then $F\times G$ commutes with $F'\times G'$. Therefore
\begin{multline*}
F_{B\times D}\circ F_{A\times C}\\
= (F_B\times \id)\circ(\id\times
F_D)\circ(F_A\times\id)\circ(\id\times F_C)\\
=(F_B\times\id)\circ(F_A\times\id)\circ(\id\times F_D)\circ(\id\times
F_C)\\
=((F_B\circ F_A)\times\id)\circ (\id\times(F_D\circ F_C))\\=
(F_B\circ F_A)\times(F_D\circ F_C)=S\times T.
\end{multline*}
\end{proof}

\begin{cor}\label{cor1th2}
Let $A$, $B$ be affine subspaces of $\R^k$, and $C$ be an affine subspace
of $\R^l$. Then the biflipper
 $\overrightarrow{(A\times C)(B\times C)}$ encodes the direct
product of the isometry of $\R^{k}$ encoded by $\overrightarrow{AB}$ by the
identity of $\R^l$ (no matter what $C$ is). \qed
\end{cor}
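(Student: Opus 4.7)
The corollary is essentially Theorem \ref{th2} applied in the degenerate case $D = C$, so the plan is to exhibit it as such a specialization.

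First I would observe that the biflipper $\overrightarrow{CC}$, consisting of two copies of the same flipper $C$ in $\mathbb{R}^l$, encodes the identity map of $\mathbb{R}^l$. This is immediate: since $F_C$ is an involution, $F_C\circ F_C = \id_{\mathbb{R}^l}$, and this coincides with the remark made in Section \ref{s1.10} that the identity is encoded by any biflipper whose two flippers are identical. So if we set $D = C$, then the isometry $T$ of $\mathbb{R}^l$ encoded by $\overrightarrow{CD}$ is precisely $\id_{\mathbb{R}^l}$.

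Next I would apply Theorem \ref{th2} directly with this choice of $D$. The theorem gives that the biflipper $\overrightarrow{(A\times C)(B\times D)} = \overrightarrow{(A\times C)(B\times C)}$ encodes the isometry $S\times T = S\times \id_{\mathbb{R}^l}$, where $S$ is the isometry of $\mathbb{R}^k$ encoded by $\overrightarrow{AB}$. This is exactly what the corollary asserts.

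There is essentially no obstacle: the only small point worth flagging is that one must be willing to allow the improper biflipper $\overrightarrow{CC}$ as a legitimate encoding of the identity, which is justified by the discussion of improper biflippers in Section \ref{s1.2} and of the identity in Section \ref{s1.10}. The parenthetical ``no matter what $C$ is'' in the statement is then automatic, since $\id_{\mathbb{R}^l}$ does not depend on the chosen flipper $C$. Hence the corollary follows in one line from Theorem \ref{th2}.
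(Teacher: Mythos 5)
Your proposal is correct and matches the paper's intent exactly: the corollary is stated with an immediate \qed precisely because it is the specialization $D=C$ of Theorem \ref{th2}, with $\overrightarrow{CC}$ encoding $\id_{\R^l}$ since $F_C\circ F_C=\id$. Nothing further is needed.
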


Observe, that all isometries of the plane, except rotations by angles
$0<\Gf<\pi$, can be obtained as products of isometries of the line. Their
biflippers can be obtained as products as well.

\subsection{In Euclidean $n$-space.}\label{s6.3}
As follows from the well-known classification of isometries of $\R^n$ (see,
e.g., M.~Berger \cite{Berger}), any isometry of $\R^n$ is isometric to a direct
product of isometries in factors of dimension at most two. Therefore, any
isometry of $\R^n$ can be encoded by a biflipper. 

This statement can be obtained also as  a corollary of much more general 
results by Wonenburger \cite{W}. She proved, in particular, that {\sfit 
any isometry of a 
non-degenerate inner product vector space over any field of characteristic
$\ne2$ can be presented as a 
composition of two linear involutory isometries.}
 
From this one can easily deduce that 
{\sfit any isometry of an affine space over a
field of characteristic $\ne2$ with 
a non-degenerate bilinear form can be
presented as a composition of two flips.\/} 

This implies that in a classical space forms (simply connected complete 
spaces of
constant curvature), isometries can be presented as compositions of two 
flips (i.e. by biflippers).

 One can ask if there exist 
simple and useful descriptions for biflippers of the composition of
isometries presented by biflippers. \medskip


\section{In  Euclidean 3-space}\label{s7}



Biflippers in the 3-space  look as follows:\vspace{5pt}

\centerline{\includegraphics{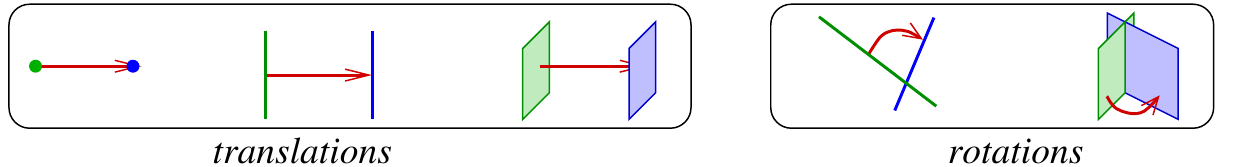}}
\vspace{15pt}

\centerline{\includegraphics{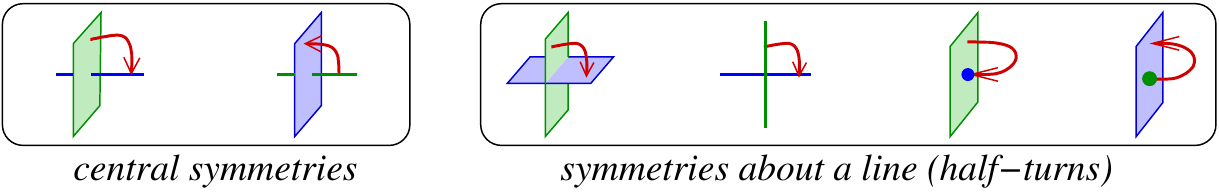}}
\vspace{15pt}

\centerline{\includegraphics{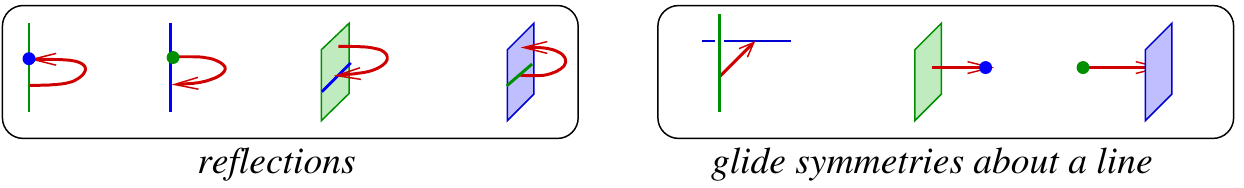}}
\vspace{15pt}

\centerline{\includegraphics{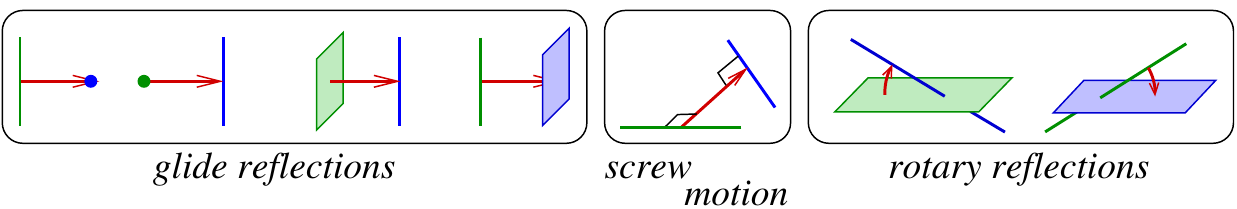}}
\vspace{10pt}

Here really all possible mutual positions of two proper distinct affine 
subspaces of $\R^3$ are presented. 
We take into account if one of the subspaces is
contained in the other or not, if the subspaces are parallel, 
perpendicular or form a generic angle, if the lines are skew.  

\subsection{Decompositions into products.}\label{s7.2}
Each of these biflippers can be obtained as products of biflippers in
plane and line. Therefore one can use Theorem \ref{th2} for recognizing the 
isometries encoded by the biflippers. Moreover, the biflippers
for translations, rotations, reflections, glide reflections and two of
four biflippers for symmetries about a line satisfy the assumptions 
of Corollary of 
Theorem \ref{th2} and encode a product of a plane isometry by the identity
map of the line. The reader can easily verify this.

\noindent{\bf Central symmetries. } 
Biflippers which encode the central symmetries are products of biflippers 
encoding central symmetries of plane and line. For instance:\\
\centerline{$\fig{0-refl-prod}$}\smallskip

\noindent{\bf Symmetries about a line. }
 Two biflippers which encode 1-flips and are formed by a plane and a 
point are products of biflippers 
which encode reflections of plane and line. For instance:\\ 
\centerline{$\fig{1-refl-prod}$}\smallskip

The two biflippers which encode symmetries about a line of the 3-space are 
degenerations of two biflippers for glide symmetries about a line.  
On our picture of all 3D biflippers, 
the biflippers of glide symmetries about a line are right under the 
corresponding biflippers for symmetries about a line.\\
 
\noindent{\bf Glide symmetries about a line. }
These biflippers for glide symmetries about a line are direct
products of biflippers for a glide reflection of plane and a reflection
of line. \\
\centerline{$\fig{gl-1-refl-prod}$}\\
\smallskip

\noindent{\bf Screw motions. }
A screw motion is a direct product of a plane rotation by a
translation of a line. 
By Theorem \ref{th2}, a biflipper for a screw motion
can be obtained as a product of biflippers for rotation and
translation:\\
\centerline{$\fig{skr-prod}$}\smallskip

\noindent{\bf Rotary reflections. } 
A rotary reflection is a direct product of a plane rotation by a reflection
of line in a point. The biflippers for rotary reflections drawn above can
be obtained (by Theorem \ref{th2}) as products of biflippers for
rotation and reflection. For instance:\\
\centerline{$\fig{rotorefl-prod}$}\smallskip

\subsection{Generic isometires.}\label{s7.3} Isometries of the last two types,
namely, screw
motions and rotary reflections, are generic in the following sense.
 
The space of all isometries of $\R^3$ is a Lie group of dimension 6. It
consists of two connected components. The connected component, which
contains the identity, consists of isometries preserving
orientation; the other component consists of isometries reversing orientation. 
Screw motions constitute an open dense subset in the set of orientation 
preserving isometries and rotary reflections  constitute an open dense subset 
 in the set of orientation reversing isometries. 

For any isometry of
$\R^3$, there exists a biflipper, which can be obtained as a
limit of biflippers of screw motions or rotary reflections.  
There may be also biflippers that are not such limits. 

For example, a biflipper of a rotation, which is formed by two intersecting
lines, can be presented as a limit of biflippers formed by skew lines, that is
biflippers of screw motions. A biflipper of a rotation, which 
is formed by two
planes, is not a limit of biflippers of screw motions. 

In the study of biflippers in $\R^3$, we will restrict ourselves 
to the case of generic isometries. We leave to the reader extensions of the 
results to more special isometries. At least, for biflippers that are limits
of biflippers of screw motions, it can be easily done by passing to
the limit.
\smallskip

\subsection{Screw motions.}\label{s7.5} 
As we saw above, an ordered pair of skew lines  \begin{wrapfigure}{l}{1in}
$\fig{skr-prod2}$ 
\end{wrapfigure}
 is a biflipper for a screw motion. Recall that a screw
motion is the composition of a translation with a rotation about a
line parallel to the direction of the translation. For the composition of
flips in skew lines, the translation acts  
along the line, which is the common perpendicular to the skew lines, 
by the distance equal twice the distance between the skew lines and 
a rotation about the same common perpendicular by the angle which is 
twice the angle between the lines. 

\begin{thm}\label{th3}
Two biflippers in the Euclidean 3-space that consist of two skew lines are
equivalent iff the common perpendicular to the skew lines in one of the
biflippers coincides with the common perpendicular to the skew lines in the
other biflipper and the biflippers can be obtained from each other by a
translation along the common perpendicular followed by a rotation about it. 
\end{thm}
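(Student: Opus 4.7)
The plan is to prove the two implications separately, using the intrinsic characterization of $P$ as the axis of the screw motion $S=F_B\circ F_A$ encoded by $\overrightarrow{AB}$. As recalled in Section \ref{s7.5} just before the theorem, $S$ is the composition of a translation along $P$ with a rotation about $P$, so $P$ is the unique $S$-invariant line and hence is intrinsic to $S$. The ``if'' direction I plan to deduce directly from Theorem \ref{th02}: translations along $P$ and rotations about $P$ together form an abelian subgroup containing $S$, so every composition $T$ of such a translation and rotation commutes with $S$; hence $\overrightarrow{T(A)T(B)}$ is equivalent to $\overrightarrow{AB}$.

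For the ``only if'' direction, I would first apply the above characterization to both biflippers: if $\overrightarrow{AB}$ and $\overrightarrow{CD}$ are equivalent skew-line biflippers, then both common perpendiculars equal the axis of their shared screw motion $S$, so they coincide. Next I would introduce coordinates with $P$ as the $z$-axis; each of $A,B,C,D$ then crosses $P$ perpendicularly and is determined by its crossing height together with the angle its direction makes with the $x$-axis (well-defined modulo $\pi$ since the line is unoriented). Recording these as $(a,\alpha)$, $(b,\beta)$, $(c,\gamma)$, $(d,\delta)$, a direct computation (using that $F_A$ acts on $P$ as the central reflection at height $a$ and on horizontal directions as the reflection at angle $\alpha$) shows that $S$ is the screw consisting of translation by $2(b-a)$ along $P$ and rotation by $2(\beta-\alpha)$ about $P$. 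Equating with the analogous formulas coming from $\overrightarrow{CD}$ yields $b-a=d-c$ and $\beta-\alpha\equiv\delta-\gamma\pmod{\pi}$. The isometry $T$ formed by translating along $P$ by $c-a$ and then rotating about $P$ by $\gamma-\alpha$ then sends $A$ to $C$ by construction, and sends $B$ to the horizontal line at height $b+(c-a)=d$ and angle $\beta+(\gamma-\alpha)\equiv\delta\pmod{\pi}$, i.e.\ to $D$.

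The main obstacle I anticipate is bookkeeping the modular identification of the angles: each of $\alpha,\beta,\gamma,\delta$ is defined only modulo $\pi$, while the screw rotation angle $2(\beta-\alpha)$ is defined modulo $2\pi$, so the rotation angle specifying $T$ carries a discrete $\pi$-ambiguity. This ambiguity is harmless: the two options for $T$ differ by a further rotation by $\pi$ about $P$, which sends every horizontal line to itself, so $T(A)=C$ and $T(B)=D$ hold in either case.
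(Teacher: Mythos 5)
Your proof is correct, and its skeleton matches the paper's: the ``if'' direction via Theorem \ref{th02} and the commutative group generated by translations along and rotations about the common perpendicular, and the coincidence of the two perpendiculars via uniqueness of the invariant line of the screw motion, are exactly the paper's arguments. Where you diverge is the last step. Having produced the isometry $T$ (a translation along the axis followed by a rotation about it) with $T(A)=C$, the paper finishes without coordinates: $\overrightarrow{C\,T(B)}$ is equivalent to $\overrightarrow{AB}$ and hence to $\overrightarrow{CD}$, so $F_{T(B)}\circ F_C=F_D\circ F_C$; cancelling $F_C$ on the right (flips are involutions) gives $F_{T(B)}=F_D$, and since a flip determines its flipper, $T(B)=D$. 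You instead compute the screw parameters $\bigl(2(b-a),\,2(\beta-\alpha)\bigr)$ explicitly in cylindrical coordinates about the axis and match them against those of $\overrightarrow{CD}$. Both are sound. The paper's cancellation trick is shorter, coordinate-free, and sidesteps the mod-$\pi$ bookkeeping entirely, since any $T$ in the centralizer with $T(A)=C$ automatically satisfies $T(B)=D$; your computation has the compensating virtue of exhibiting the translation distance and rotation angle of the screw motion concretely and of making visible (and correctly disposing of) the discrete $\pi$-ambiguity that the abstract argument hides.
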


\begin{proof}
Let $\overrightarrow{AB}$ is a biflipper formed of skew lines $A$ and $B$
with common perpendicular $L$.  
Translations along  $L$ and rotations about $L$ generate a
commutative group.  The screw motions encoded by
 $\overrightarrow{AB}$ belongs to this group.   
Therefore, by Theorem \ref{th02} translations along $L$
and rotations about $L$ map  $\overrightarrow{AB}$  to equivalent
biflippers. 

By the way, this centralizer group for the screw motion can be
obtained also from the decomposition of the screw motion into a direct
product of a plane rotation and a line translation: the centralizer of a
plane rotation is the group of all plane rotations with the same center,
the centralizer of a line translation is the group of all line
translations, the centralizer of their direct product is the product of the
centralizers.     

Let  $\overrightarrow{CD}$ be a biflipper equivalent to
$\overrightarrow{AB}$. By the classification of biflippers in the 3-space
made above, $C$ and $D$ are skew lines. Only one line is invariant under a 
screw motion, and the common perpendicular of  two skew lines is
invariant under the reflections in the lines, and hence under their
composition. Thus, $C$ and $D$ have the same common perpendicular as $A$
and $B$. By an appropriate composition $T$ of a translation along this line 
and rotation along it, we can map $A$ to $C$. Then $\overrightarrow{CT(B)}$
is equivalent to $\overrightarrow{AB}$ and hence to $\overrightarrow{CD}$.
So, $F_{T(B)}\circ F_C=F_D\circ F_C$. By multiplying this equality by $F_C$
from the right, we get $F_{T(B)}=F_D$ and hence $T(B)=D$.
\end{proof}

\subsection{Head to tail rule for screw motions.}\label{s7.6} 
Take biflippers $\overrightarrow{AB}$ and $\overrightarrow{CD}$  
for the screw motions, and in each of them  
extend the arrow connecting the flippers to the axes of the screw
motions   
(the common perpendicular $X$ for lines $A$ and $B$ and the common
perpendicular $Y$ for $C$ and $D$).  
Find the common perpendicular $Z$ to $X$ and $Y$. 
By gliding the biflippers along their axes and rotating about the axes,
make the head flipper of the first biflipper and the tail
flipper of the second biflipper coinciding with $Z$. 
Find the common perpendicular to the tail
flipper  
of the first biflipper and head flipper of the second biflipper. 
Connect these flippers
 with an arrow along this common perpendicular. 
Erase the old arrows and their common perpendicular.\\
\centerline{\begin{picture}(0,0)%
\includegraphics{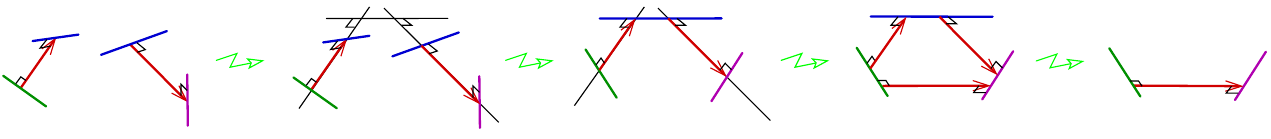}%
\end{picture}%
\setlength{\unitlength}{2901sp}%
\begingroup\makeatletter\ifx\SetFigFont\undefined%
\gdef\SetFigFont#1#2#3#4#5{%
  \reset@font\fontsize{#1}{#2pt}%
  \fontfamily{#3}\fontseries{#4}\fontshape{#5}%
  \selectfont}%
\fi\endgroup%
\begin{picture}(8292,886)(-1413,-305)
\put(-719,344){\makebox(0,0)[lb]{\smash{{\SetFigFont{8}{9.6}{\rmdefault}{\mddefault}{\updefault}{\color[rgb]{0,0,.82}$C$}%
}}}}
\put(-404,-241){\makebox(0,0)[lb]{\smash{{\SetFigFont{8}{9.6}{\rmdefault}{\mddefault}{\updefault}{\color[rgb]{.69,0,.69}$D$}%
}}}}
\put(-1079,-151){\makebox(0,0)[lb]{\smash{{\SetFigFont{8}{9.6}{\rmdefault}{\mddefault}{\updefault}{\color[rgb]{0,.56,0}$A$}%
}}}}
\put(361,-196){\makebox(0,0)[lb]{\smash{{\SetFigFont{8}{9.6}{\rmdefault}{\mddefault}{\updefault}{\color[rgb]{0,0,0}$X$}%
}}}}
\put(1801,-151){\makebox(0,0)[lb]{\smash{{\SetFigFont{8}{9.6}{\rmdefault}{\mddefault}{\updefault}{\color[rgb]{0,0,0}$Y$}%
}}}}
\put(2161,-151){\makebox(0,0)[lb]{\smash{{\SetFigFont{8}{9.6}{\rmdefault}{\mddefault}{\updefault}{\color[rgb]{0,0,0}$X$}%
}}}}
\put(3556,-151){\makebox(0,0)[lb]{\smash{{\SetFigFont{8}{9.6}{\rmdefault}{\mddefault}{\updefault}{\color[rgb]{0,0,0}$Y$}%
}}}}
\put(-1304,344){\makebox(0,0)[lb]{\smash{{\SetFigFont{8}{9.6}{\rmdefault}{\mddefault}{\updefault}{\color[rgb]{0,0,.82}$B$}%
}}}}
\put(1531,434){\makebox(0,0)[lb]{\smash{{\SetFigFont{8}{9.6}{\rmdefault}{\mddefault}{\updefault}{\color[rgb]{0,0,0}$Z$}%
}}}}
\put(3331,434){\makebox(0,0)[lb]{\smash{{\SetFigFont{8}{9.6}{\rmdefault}{\mddefault}{\updefault}{\color[rgb]{0,0,.82}$Z$}%
}}}}
\end{picture}%
}

\subsection{Rotary reflections.}\label{s7.6'} 
As we saw above, an ordered pair formed of transversal line and plane is 
a biflipper for a rotary reflection. 
Recall that a rotary reflection is a composition of a reflection in a plane 
with a rotation about a line which is perpendicular to the plane. 
The rotation and reflection commute, therefore the order does not matter.\\
\centerline{ 
$\vcenter{\hbox{\begin{picture}(0,0)%
\includegraphics{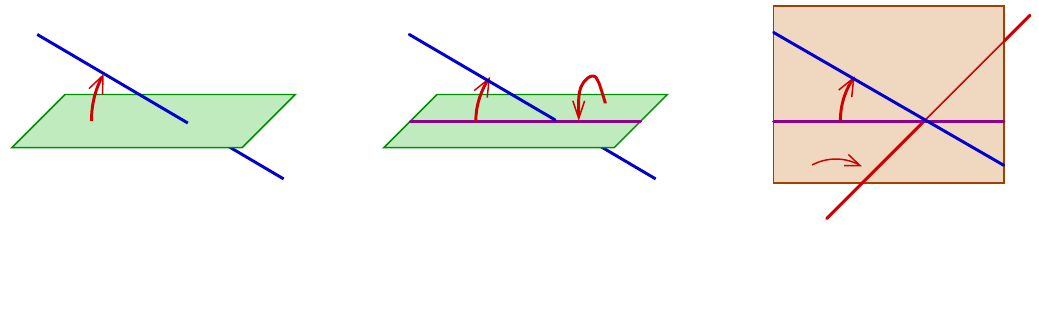}%
\end{picture}%
\setlength{\unitlength}{3729sp}%
\begingroup\makeatletter\ifx\SetFigFont\undefined%
\gdef\SetFigFont#1#2#3#4#5{%
  \reset@font\fontsize{#1}{#2pt}%
  \fontfamily{#3}\fontseries{#4}\fontshape{#5}%
  \selectfont}%
\fi\endgroup%
\begin{picture}(5257,1570)(211,-899)
\put(2194, 98){\makebox(0,0)[lb]{\smash{{\SetFigFont{11}{13.2}{\rmdefault}{\mddefault}{\itdefault}{\color[rgb]{.56,0,.56}$C$}%
}}}}
\put(2488,425){\makebox(0,0)[lb]{\smash{{\SetFigFont{11}{13.2}{\rmdefault}{\mddefault}{\itdefault}{\color[rgb]{0,0,.82}$B$}%
}}}}
\put(3292,263){\makebox(0,0)[lb]{\smash{{\SetFigFont{11}{13.2}{\rmdefault}{\mddefault}{\itdefault}{\color[rgb]{0,0,.82}$A$}%
}}}}
\put(1396,254){\makebox(0,0)[lb]{\smash{{\SetFigFont{11}{13.2}{\rmdefault}{\mddefault}{\itdefault}{\color[rgb]{0,0,.82}$A$}%
}}}}
\put(586,434){\makebox(0,0)[lb]{\smash{{\SetFigFont{11}{13.2}{\rmdefault}{\mddefault}{\itdefault}{\color[rgb]{0,0,.82}$B$}%
}}}}
\put(4141,119){\makebox(0,0)[lb]{\smash{{\SetFigFont{11}{13.2}{\rmdefault}{\mddefault}{\itdefault}{\color[rgb]{.56,0,.56}$C$}%
}}}}
\put(4366,434){\makebox(0,0)[lb]{\smash{{\SetFigFont{11}{13.2}{\rmdefault}{\mddefault}{\itdefault}{\color[rgb]{0,0,.82}$B$}%
}}}}
\put(676,-826){\makebox(0,0)[lb]{\smash{{\SetFigFont{11}{13.2}{\rmdefault}{\mddefault}{\itdefault}{\color[rgb]{0,0,0}rotary reflection = rotation $\circ$ reflection in plane}%
}}}}
\put(4771,434){\makebox(0,0)[lb]{\smash{{\SetFigFont{11}{13.2}{\rmdefault}{\mddefault}{\itdefault}{\color[rgb]{.5,.17,0}$D$}%
}}}}
\put(226,-601){\makebox(0,0)[lb]{\smash{{\SetFigFont{11}{13.2}{\rmdefault}{\mddefault}{\itdefault}{\color[rgb]{0,0,0}$F_B\circ F_A=(F_B\circ F_C)\circ(F_C\circ F_A)=F_B\circ F_C\circ F_D$}%
}}}}
\end{picture}%
}}$}

As we saw above, a rotary reflection
is a direct product of a planar rotation and reflection of a line. 
The centralizer group of a rotary reflection is the product of the
centralizers of the factors.  Therefore the centralizer of a
rotary reflection is the direct product of the group of plane rotations about
the fixed point of the rotary reflection and the reflection of the 3-space in
the plane of rotations. 

A rotary reflection has a fixed point and invariant plane and line orthogonal
to each other and intersecting in the fixed point. Flippers in a biflipper,
which encodes a rotary reflection are a line and a plane, the plane contains
the invariant line and the line is contained in the invariant plane and
passes through the fixed point. 

Using these one can prove that two biflippers of rotary reflections are
equivalent iff they can be obtained from each other by a rotation about the
invariant line and reflection in the plane flipper of the biflipper with a
simultaneous change of the flippers' ordering.  

\subsection{No head to tail for rotary reflections.}\label{s7.7}
If two rotary reflections are such that their invariant lines are skew and the 
common perpendicular to the  lines does not pass through the fixed
points of both rotary reflections, then there are no biflippers of the
rotary reflections with a common flipper. Thus, there is no head to tail
method
that works for composing any two rotary reflections in the 3-space.

\subsection{Screw motions help.}\label{s7.8}
However, one can easily calculate composition $S\circ T$ of rotary reflections
using a head to tail rule for composing two screw motions. For this, first
choose biflippers $\overrightarrow{AB}$ and $\overrightarrow{CD}$ for $T$
and $S$, respectively, such that $B$ and $C$ are planes. Then consider the
biflipper $\overrightarrow{BC}$. Since it consists of planes, it encodes
either translation (if $B\parallel C$) or rotation (otherwise). Take any
plane $E$ orthogonal to both $B$ and $C$. By Theorem \ref{th01}, the
biflipper $\overrightarrow{B'C'}$ formed of the lines
$B'=B\cap E$ and $C'=C\cap E$ is equivalent to $\overrightarrow{BC}$. Thus
$S\circ T=F_A\circ F_{B'}\circ F_{C'}\circ F_D$, and we deal with the
composition of screw motions or their degenerations, and can apply
the technique of Section \ref{s7.5}.

\subsection{No head to tail for composition of rotary reflection and screw
motion.}\label{s7.9}
Similarly, if a rotary reflection and screw motion are such that their
invariant lines are skew and the common perpendicular to the lines does not
pass through the fixed point of the rotary reflection, then the
rortoreflection and screw motion have no biflippers with a common
mirror, and there is no head to tail rule
that would work for composing them. 

\subsection{Screw motions help again.}\label{s7.10}
However, the composition can be easily calculated. For this one can use
again a decomposition of a rotary reflection into a composition of a screw
motion and reflection. 

Say, if $T$ is a
rotary reflection, and $S$ is a screw motion, then choose biflippers  
$\overrightarrow{AB}$ and $\overrightarrow{CD}$ for $T$
and $S$, respectively, such that $A$ is a plane. Let $E$ be any line on
$A$ and $F$ be a plane containing $E$ and perpendicular to $A$. Then
$F_A=F_E\circ F_F$ and $S\circ T=F_D\circ F_C\circ F_B\circ F_A=F_D\circ
F_C\circ F_B\circ F_E\circ F_F$. For $F_D\circ
F_C\circ F_B\circ F_E$ one can apply a head to tail composition method and
replace it by $F_X\circ F_Y$. Now we have to reduce the composition
$F_X\circ F_Y\circ F_F$, where $X$ and $Y$ are lines and $F$ is a plane. 
Denote the common perpendicular of $X$ and $Y$ by $Z$. 

If $F$ is not parallel to $Z$, then, 
by a translation along $Z$, we move the point $Z\cap Y$ to $F$ and, by a
rotation about $Z$ place $Y$ on $F$. As $Y\subset F$, the composition
$F_Y\circ F_F$ is a reflection in a plane $W$ containing $Y$ and perpendicular
to $F$. Hence $S\circ T=F_X\circ F_W$ and we are done. 

If $F$ is parallel to $Z$, then by a rotation about $Z$ we make $Y$
perpendicular to $F$, and observe that, as soon as this is done, 
$F_Y\circ F_F=F_{Y\cap F}$. Hence, 
$S\circ T=F_X\circ F_Y\circ F_F=F_X\circ F_{Y\cap F}$.

\section{In hyperbolic 3-space}\label{s8}

\subsection{Well-known facts.}\label{s8.0}
We will use the Poincar\'e model, in which the hyperbolic 3-space is
represented by an open unit ball $H^3$ in $\R^3$.
The boundary 2-sphere $\p H^3$ of $H^3$ is called the {\sfit absolute}.

In this model, a plane is either an open unit disk, which is cut on $H^3$  
by a plane passing through the origin, or a surface, which cut on $H^3$
by a 2-sphere orthogonal to $\p H^3$. In any case the boundary of a plane in
$\R^3$ is a circle which lies on $\p H^3$. It is called the {\sfit absolute\/} 
of this plane.
A line in $H^3$ is an arc which can be presented as the intersection of two
different planes. A line is either a diameter of the ball $H^3$, or an arc
cut on $H^3$ by a circle orthogonal to $\p H^3$.

Two planes may intersect in a line or be disjoint. In the latter case,
their absolutes may be disjoint and then the planes are said to be {\sfit
ultra-parallel\/} or be tangent to each other and then the planes are said
to be {\sfit parallel}. 
Two ultra-parallel planes have a unique common perpendicular line. Conversely, 
two planes perpendicular to the same line are ultra-parallel.
 
There are three kinds of flippers: planes, lines and points. 
Any plane, line or point is a flipper.
In the Poincar\'e model, the flip in a plane, which passes through the
origin of $\R^3$ is the restriction of the usual Euclidean 
symmetry of $\R^3$ with respect to the 2-subspace, which cut the plane on
$H^3$; the flip in a plane, 
which is cut by a 2-sphere $S$, is a restriction  of the inversion of $\R^3$ 
with respect to $S$. A flip in a line is a composition of flips in any two 
planes passing through the line and orthogonal to each other. Similarly,
the flip in a point is the composition of flips in any three planes passing
through this point and orthogonal to each other. 

\subsection{Biflippers.}\label{s8.1} 
We  restrict ourselves to generic biflippers leaving to the reader to
compile a complete list of biflippers in the hyperbolic 3-space. As in
other situations that we considered above, we just list ordered pairs of
flippers and figure out what the compositions of the corresponding flips
are. See \cite{Fenchel}, chapter IV.
\medskip 

\centerline{$\sfig{3D-hyp-frAr-1}{.9}$}
\centerline{$\fig{3D-hyp-frAr-2}$}
\medskip

\noindent
{\bf Exercise.} Find an explicit descriptions for equivalence of the biflippers 
and head to tail rules.
Advice: revisit sections \ref{s3} and \ref{s7}. 

\section{Full M\"obius group.}\label{s9}
An isometry of the hyperbolic 3-space $H^3$ acts on the absolute $\p H^3$.
Recall that the absolute $\p H^3$ is a 2-sphere. 
In the Poincar\'e model, $\p H^3$ is the unit sphere 
$S^2=\{x\in\R^3\mid |x|=1\}$ in $\R^3$. Isometries of $H^3$ induce on it
transformations which preserve angles, but do not necessarily preserve 
any metric or orientation. They form a group, which is called the {\sfit full
M\"obius group\/}. As an abstract group, it is isomorphic to the group of
all isometries of the hyperbolic 3-space. The transition from an isometry
of $H^3$ to the corresponding transformation of $\p H^3$ is an isomorphism
between the groups. 

The M\"obius group is a subgroup of the full M\"obius group formed by
orientation preserving transformations. A general trend to discriminate
orientation reversing maps and non-orientable manifolds in the
2-dimensional case is rationalized by an attention to complex structures. 
Elements of the M\"obius group are identified with fractional linear
transformations with complex coefficients, the 2-sphere is idetified with  
the complex projective line $\C P^1$ and the M\"obius group with the group
of its complex projective automorphisms. 

In our context, orientation reversing elements of the full M\"obius
group are not any worse than orientation preserving ones. The group of 
isometries of $H^3$ includes orientation reversing isometries. The group is
generated by reflections in planes. Respectively, the full M\"obius group 
is generated by involutions, which fixed point sets are
circles (the absolutes of the planes). 
If the fixed point set is a great circle, then the involution of $S^2$ is
the restriction of the reflection of $\R^3$ in the plane, which cut the
circle on the sphere. If the fixed point set is not a great circle, it is
cut on $S^2$ by a sphere $S$ orthogonal to $S^2$, and the involution is the
restriction to $S^2$ of the inversion of $\R^3$ in $S$. 

As was mentioned in the beginning of the paper, the notions of flip, 
flipper and biflipper are naturally extended to this setup. 
There are two kinds of flippers: circles that are cut on $S^2$ by planes
and pairs of points. The flips corresponding to circles are the involutions
which were described above as the generators of the M\"obius group. The flips
corresponding to pairs of points are compositions of two commuting flips of
the first kind.

In the M\"obius group there are many involutions without fixed points. They
are induced by flips on $H^3$ with one point flippers. These
involutions of $S^2$ share the fixed point set in $S^2$ (the empty), hence
they are not determined by their fixed point sets and are not qualified 
to be flips in $S^2$.

The stereographic projection $st:S^2\sminus pt\to\R^2$ is a conformal
isomorphism, and we can use it for pictures. 
Here is the list of plane images of the biflippers. The first row is filled
by biflippers of M\"obius transformations. 
The biflippers of the second row
correspond to orientation reversing transformations.
\smallskip

\centerline{$\fig{moeb-frAr-1}$}
\centerline{$\fig{moeb-frAr-2}$}

\medskip

\noindent{\bf Conclusion.} Presentations of isometries as compositions of two
flips allow one to find {\sfit geometric\/} algorithms for
calculating compositions. In the two-dimensional classical homogeneous spaces
this works perfectly. 
The two-dimensional geometry is the most interesting
for the pedagogical applications.

As the dimension grows the algorithms become more complicated. In the
three-dimensional Euclidean space the algorithms are still easy, but for
some pairs of isometries,  flip-flop decompositions of them such that the same 
flipper appears in both decompositions do not exist. 


\end{document}